\titleformat{\section}{\centering\normalsize}{\textbf{\thesection .}}{1em}{\textbf}
\theoremstyle{plain}
\newtheorem{thm}{Theorem}[section]
\newtheorem{lem}[thm]{Lemma}
\newtheorem{prop}[thm]{Proposition}
\newtheorem{cor}[thm]{Corollary}
\theoremstyle{definition}
\newtheorem{rmk}[thm]{Remark}
\newtheorem{ex}[thm]{Example}
\newtheorem{defi}[thm]{Definition}
\numberwithin{equation}{section}
\newcommand{\R}{\mathbb{R}}
\newcommand{\emb}{\hookrightarrow}
\newcommand{\eps}{\varepsilon}
\newcommand{\norm}[1]{\| #1 \|}
\newcommand{\abs}[1]{| #1 |}
\newcommand{\Z}{\mathbb{Z}}
\newcommand{\N}{\mathbb{N}}
\newcommand{\No}{\mathbb{N}_0}
\newcommand{\C}{\mathbb{C}}
\newcommand{\Sc}{\mathcal{S}}
\newcommand{\supp}{\text{supp }}
\renewcommand{\supp}{\mathop{\mathrm{supp}}}
\newcommand{\bl}[1]{\boldsymbol\ell_{\bf{#1}}}
\newcommand{\tr}{\text{\normalfont{tr}}_{\R^{n-1}}}
\newcommand{\trk}{\text{\normalfont{tr}}_{\R^{n-k}}}
\newcommand{\ext}{\text{ext }}
\renewcommand{\ext}{\mathop{\mathrm{ext}}}
\newcommand{\id}{\mathop{\mathrm{id}}}
\newcommand{\blue}[1]{{\color{blue}#1}}
\renewcommand{\blue}[1]{{#1}}
\newcommand{\magenta}[1]{{\color{magenta}#1}}
\renewcommand{\magenta}[1]{{#1}}
\newcommand{\dint}{\,\mathrm{d}}
\newcommand{\Dd}{{\mathrm{D}}}
\newcommand{\ignore}[1]{}
\begin{document}

\title{Traces of some weighted function spaces and related non-standard real interpolation of Besov spaces}
\author{Blanca F. Besoy\footnote{Supported by MTM2017-84058-P(AEI/FEDER, UE) and FPU grant FPU 16/02420 of the Spanish Ministerio de Educaci\'on, Cultura y Deporte.} \and Dorothee D. Haroske \and Hans Triebel}
\AtEndDocument{\bigskip{\footnotesize%
  \textbf{Blanca F. Besoy }\textsc{Departamento de Ingeniería Geológica y Minera, Escuela Técnica Superior de Ingenieros de Minas y Energía, Universidad Politécnica de Madrid. Calle de Alenza, 4, 28003, Madrid.} \par  
  \textit{E-mail address:} \texttt{blanca.fbesoy@upm.es} \par
  \addvspace{\medskipamount}
  \textbf{Dorothee D. Haroske }\textsc{Institute of Mathematics, Friedrich Schiller University Jena, 07737 Jena, Germany.} \par
  \textit{E-mail address:} \texttt{dorothee.haroske@uni-jena.de} \par
  \addvspace{\medskipamount}
  \textbf{Hans Triebel }\textsc{Institute of Mathematics, Friedrich Schiller University Jena, 07737 Jena, Germany.} \par
  \textit{E-mail address:} \texttt{hans.triebel@uni-jena.de}
}}

\date{}

\maketitle

\begin{abstract}
\noindent
We study traces of weighted Triebel-Lizorkin spaces $F^s_{p,q}(\R^n,w)$ on hyperplanes $\R^{n-k}$, where the weight is of Muckenhoupt type. We concentrate on the example weight $w_\alpha(x) = |x_n|^\alpha$ when $|x_n|\leq 1$, $x\in\R^n$, and $w_\alpha(x)=1$ otherwise, where $\alpha>-1$. Here we use some refined atomic decomposition argument as well as an appropriate wavelet representation in corresponding (unweighted) Besov spaces.  The second main outcome is the description of the real interpolation space $(B^{s_1}_{p_1,p_1}(\R^{n-k}), B^{s_2}_{p_2,p_2}(\R^{n-k}))_{\theta,r}$, $0<p_1<p_2<\infty$, $s_i=s-(\alpha+k)/{p_i}$, $i=1,2$, $s>0$ sufficiently large, $0<\theta<1$, $0<r\leq\infty$. Apart from the case $1/r= (1-\theta)/{p_1}+ {\theta}/{p_2}$ the question seems to be open for many years. Based on our first result we can now quickly solve this long-standing problem. Here we benefit from some very recent finding of Besoy, Cobos and Triebel.\\

\noindent{\bf 2010 MSC:} Primary 46E35, 46M35; Secondary 42C40, 42B35

\noindent{\bf Keywords:} Muckenhoupt weights, function spaces, traces; real interpolation

\end{abstract}

\section{Introduction}
We study functions which belong to some weighted spaces of Besov and Triebel--Lizorkin
type, $B^s_{p,q}(\R^n,w)$ and $F^s_{p,q}(\R^n,w)$, thus including Sobolev spaces, where the weight function belongs to some
Muckenhoupt class. Such spaces have been treated systematically by
{Bui et al.} in \cite{Bui82,Bui84,Bui96,Bui97}. Later this topic was revived and extended by {Rychkov} in \cite{rychkov-5}, including also approaches for locally regular weights.

Our main attention here is to determine the traces of such weighted spaces on hyperplanes. 
%
Trace questions are of particular interest in view of boundary value problems
of elliptic operators, where some singular behaviour near the boundary
(characterised by the appropriate Muckenhoupt weight) may occur. A standard approach is to start with assertions about traces on hyperplanes and then to transfer these findings to spaces defined on bounded domains with sufficiently smooth boundary. Further studies may concern compactness or regularity results,
leading to the investigation of spectral properties. First partial results can be found
in \cite{nik,T78} for domains $\Omega$ with smooth boundaries
$\partial\Omega$ and Muckenhoupt  weights of type
$w(x)=\left(\mathrm{dist}(x,\partial\Omega)\right)^\gamma$,  $\gamma>-1$. This was
further extended to fractal $d$-sets $\Gamma$ in \cite{iwona-phd}, using the
atomic approach \cite{HP} and based on ideas for the unweighted case in
\cite{T97}. Parallel observations for special weights can be found in \cite{HM,HaSchm}. 

The example weight we are interested in here is the weight $w_\alpha(x)$, $\alpha>-1$, defined by $w_\alpha(x)=|x_n|^\alpha$ when $|x_n|\leq 1$, and $w_\alpha(x)=1$ otherwise. As it is well-known, $w_\alpha $ belongs to the largest Muckenhoupt class $\mathcal{A}_\infty$ when $\alpha>-1$. We can prove that, appropriately interpreted,
\[
\trk F^s_{p,q}(\R^{n}, w_\alpha) = B^{s-\frac{\alpha+k}{p}}_{p,p}(\R^{n-k}),
\]
where $0<p<\infty$, $0<q\leq\infty$, $k\in \{1, \dots, n-1\}$, $\alpha>-1$, and $s-\frac{\alpha+k}{p}>(n-k) \max\{\frac1p-1,0\}$. In particular, we construct  a linear and bounded extension operator $\ext$ from $B^{s-\frac{\alpha+k}{p}}_{p,p}(\R^{n-k})$ to $F^s_{p,q}(\R^{n}, w_\alpha)$ such that $\trk \circ \ext$ is the identity in $B^{s-\frac{\alpha+k}{p}}_{p,p}(\R^{n-k})$. 
We refer to Sections~\ref{prelim} and \ref{F-weight} for the details and definitions. This result extends previous findings in \cite{P,HM}. We prove it based on an atomic decomposition result \cite{HP} in combination with the wavelet decomposition of (unweighted) Besov spaces in \cite[Theorem 1.20]{T08}. In that way everything is shifted to the argument on the sequence space side.

Our second main goal is the answer to a long-standing problem in real interpolation theory, mentioned already by Peetre in \cite[p.110]{Peetre}. It is well known, that the real interpolation space 
\[
(B_{p_1, p_1}^{s-\frac{1}{p_1}}(\R^{n}), B_{p_2, p_2}^{s-\frac{1}{p_2}}(\R^{n}))_{\theta,p} = B_{p,p}^{s-\frac{1}{p}}(\R^{n}),
\]
where   $0<p_1<p_2<\infty$, $0<\theta<1$, $\frac1p= \frac{1-\theta}{p_1} + \frac{\theta}{p_2}$, and $s-\frac{1}{p_j}>n \max\{\frac{1}{p_j}-1,0\}$, $j=1,2$.  But what is the resulting space in the more general situation 
\[(B_{p_1,p_1}^{s-\frac{\alpha}{p_1}}(\R^{n}), B_{p_2,p_2}^{s-\frac{\alpha}{p_2}}(\R^{n}))_{\theta,r} ,\] 
where $\alpha>0$, and $0<r\leq\infty$, but $r \neq p$? It seems that no answer has been obtained so far. Based on our results in Section~\ref{F-weight} we can now easily describe the resulting space via its wavelet representation. Here we benefit from the very recent outcome in \cite{BCT}. In the very end we present an alternative argument which works for all $\alpha\in\R$.

The paper is organised as follows. In Section~\ref{prelim} we collect some
notation, and the basic facts about Muckenhoupt weights and function spaces of Besov and Triebel-Lizorkin type, as far as needed in the sequel. In Section~\ref{F-weight} we concentrate on the trace space $\trk F^s_{p,q}(\R^n,w_\alpha)$, whereas Section~\ref{interpol} is devoted to the new result in real interpolation of Besov spaces.

\section{Weights and function spaces}\label{prelim}
We start with a brief introduction of the \emph{Muckenhoupt classes} $\mathcal{A}_{p}$. For a Lebesgue measurable set $E \subset \R^{n}$, we denote by $\abs{E}$ the Lebesgue measure of the set $E$ on $\R^{n}$ and by a weight $w$ we shall always mean a locally integrable function $ w \in L_{1}^{\text{loc}}(\R^{n})$ and positive almost everywhere. Let $M$ stand for the \emph{Hardy-Littlewood maximal operator} given by
\begin{equation}\label{eq:maximal operator}
Mf(x) = \sup_{r > 0} \frac{1}{\abs{B(x,r)}} \int_{B(x,r)} \abs{f(y)} \dint y, \quad x \in \R^{n},
\end{equation}
where $B(x,r) = \{ y \in \R^{n}: \abs{y-x} < r\}$ and $\abs{B(x,r)}$ denotes the Lebesgue measure of the ball $B(x,r)$. 
\begin{defi}\label{def:Muckenhoupt class}
Let $w$ be a weight on $\R^{n}$. 
\begin{enumerate}[\bfseries (i)]
\item We say that $w$ belongs to the Muckenhoupt class $\mathcal{A}_{p}$, $ 1 < p < \infty$, if there exists a constant $ C > 0$ such that for all balls $B$ the following inequality holds
\[
\Big(\frac{1}{\abs{B}} \int_{B} w(x) \dint x \Big)^{1/p} \Big(\frac{1}{\abs{B}} \int_{B} w(x)^{-p'/p} \magenta{\dint x}\Big)^{1/p'}\leq C,
\]
where $\frac{1}{p}+\frac{1}{p'} =1$. 
\item We say that $w$ belongs to the Muckenhoupt class $\mathcal{A}_{1}$ if there exists a constant $C > 0$ such that
\[
Mw(x) \leq C w(x)
\]
for almost every $x \in \R^{n}$. 
\item The Muckenhoupt class $\mathcal{A}_{\infty}$ is defined as
\[
\mathcal{A}_{\infty} = \bigcup_{1<p<\infty} \mathcal{A}_{p}.
\]
\end{enumerate}
\end{defi}
During the rest of the paper we are going to focus mostly on the following well-known Muckenhoupt weights.

\blue{
  \begin{rmk}
    Since the pioneering work of Muckenhoupt \cite{Muck72,Muck72a,Muck73}, these classes of weight functions have been studied in great detail, we refer, in particular, to the monographs \cite{GC,Tor86,St93} for a complete account on the theory of Muckenhoupt weights. Among the various features of such weights we would like to mention a somehow surprising one, the so-called `reverse H\"older inequality': if $w\in \mathcal{A}_p$ with $p>1$, then there exists some number $r<p$ such that $w\in\mathcal{A}_r$ (the monotonicity in the other direction is clear). In our case this fact will re-emerge in the number 
\begin{equation}
r_0(w)  := \inf\{r\geq 1~: w\in\mathcal{A}_r\} , \quad w\in\mathcal{A}_\infty, 
\label{r0}
\end{equation}
that plays an essential r\^ole later on.    
\end{rmk}}

\begin{ex}\label{Ex-walpha}
  Let $ \alpha \in \R$ and define the weight $w_\alpha$ for $x=(x_{1},\dots,x_{n}) \in \R^{n}$ by
\begin{equation}\label{eq:weight w_alpha}
w_{\alpha}(x)=
\begin{cases}
\abs{x_n}^{\blue{\alpha}} &\text{ if } \abs{x_n}\leq 1, \\
1 &\text{ if } \abs{x_{n}} > 1.
\end{cases}
\end{equation}
For $ 1 < p < \infty$, the weight $w_{\alpha} \in \mathcal{A}_{p}$ if, and only if, $ -1 < \alpha < p-1$ and $w_{\alpha} \in \mathcal{A}_{1}$ if, and only if, $ -1 < \alpha \leq 0$. We write
\[
r_{0}(w_{\alpha}) = \inf\{\magenta{r \geq 1}: w_{\alpha} \in \mathcal{A}_{r}\} = \max\{\alpha+1,1\} = 
\begin{cases}
1 &\text{ if } -1 < \alpha \leq 0, \\
\alpha + 1 &\text{ if } \alpha > 0.
\end{cases}
\]
(See \cite[Proposition 2.8 and Remark 2.9/(b)]{HP}).
\end{ex}


Let $(\Omega, \mu)$ be a measure space, $ 0 < p < \infty$ and $ 0 < r \leq \infty$. We recall that the \emph{Lorentz space} $L_{p,r}(\Omega,\mu)$ is the set of all measurable functions $f: \Omega \longrightarrow \C$ with finite quasi-norm 
\[
\norm{f|L_{p,r}(\Omega,\mu)} = 
\begin{cases}
\Big(\int_{0}^{\infty} [t\mu_{f}(t)^{1/p}]^{r} \frac{\dint t}{t} \Big)^{1/r} &\text{ if } 0 < r < \infty, \\
\sup_{t>0} t\mu_{f}(t)^{1/p} &\text{ if } r=\infty,
\end{cases}
\]
where
\[
\mu_{f}(t) = \mu(\{\omega \in \Omega: \abs{f(\omega)} >t\}).
\]
For $p=r$, the Lorentz space $L_{p,p}(\Omega,\mu)$ coincides with the \emph{Lebesgue space} 
\[L_{p}(\Omega,\mu) = \Big\lbrace f: \Omega \rightarrow \C \text{ measurable }: \norm{f|L_{p}(\Omega,\mu)} = \Big(\int_{\Omega} \abs{f(\omega)}^{p} \dint\mu\Big)^{1/p} < \infty \Big\rbrace \]
with equivalent quasi-norms. (See, for example, \cite[Theorem~3.15]{T15a}, 
\cite[Proposition 1.4.9]{Gr}, or \cite[Chapter 2, Proposition 1.8]{BS}). If $w$ is a weight on $\R^{n}$ we simply write $L_{p,r}(\R^{n},w)$ for Lorentz spaces defined on the measure space $(\R^{n},w(x)\dint x)$ and $L_{p}(\R^{n},w)$ for the related Lebesgue spaces.

\blue{\begin{rmk}
Note that for $p=\infty$ one obtains the classical (unweighted) Lebesgue space, 
$L_\infty(\R^n, w) = L_\infty(\R^n)$, $w\in\mathcal{A}_\infty$, which explains that we restrict ourselves to $\ p<\infty$ in what follows.
\end{rmk}}

\begin{defi}
Let $(A, \norm{ \cdot  |A  })$ be a quasi-Banach space, $(\Omega,\mu)$ a measure space, $ 0 < p < \infty$ and $ 0 < r \leq \infty$. We define $L_{p,r}(A; \Omega,\mu)$ as the space of all (equivalence classes of) strongly measurable functions $f: \Omega \longrightarrow A$ which have finite quasi-norm
\[
\norm{f|L_{p,r}(A; \Omega,\mu)} = \norm{\norm{f(\cdot)|A} \ |L_{p,r}(\Omega,\mu)}.
\]
\end{defi}
See, for example, \cite{Kree, T78, BL}. 

\begin{ex}
  Now we collect 
  some concrete examples of these spaces that will 
  appear later.
\begin{itemize}
\item If $ A = \C$, then $L_{p,r}(A; \Omega,\mu) = L_{p,r}(\Omega,\mu)$.
\item If $p=r$, then $L_{p,p}(A; \Omega,\mu)$ is the Lebesgue space
\[
L_{p}(A; \Omega,\mu) = \Big\lbrace f: \Omega \longrightarrow A \text{ measurable }: \norm{f|L_{p}(A; \Omega,\mu)}=\norm{\norm{f(\cdot)|A} |L_{p}(\Omega,\mu)} < \infty \Big\rbrace
\]
with equivalent quasi-norms.
\item If $\Omega=\R^{n}$ and $\mu$ is the Lebesgue measure, we put $L_{p,r}(A) := L_{p,r}(A; \Omega,\mu)$.
\item If $w$ is a weight on $\R^{n}$, we put $L_{p,r}(A,w) := L_{p,r}(A; \R^{n}, w(x) \dint x)$. 
\item Let $\Omega = \N_{0}=\N \cup\{0\}$, $s \in \R$, $ A = \C$, and $\mu= \sum_{j \in \N_{0}} 2^{js}\delta_{\{j\}}$ the (weighted) counting measure, that is, where $\delta_{\{j\}}(B)=\begin{cases} 1, & j\in B,\\ 0, & j\not\in B, \end{cases}$\quad and $B\subseteq \Omega$ measurable. Then 
\[
\ell_{p}^{s}:= L_{p}(\C, \N_{0},\mu)=\Big\lbrace \lambda=(\lambda_{j})_{j\in\No}\subset \C: \norm{ \lambda |\ell_{p}^{s}} = \Big(\sum_{j \in \N_0} 2^{jsp} \abs{\lambda_{j}}^{p} \Big)^{1/p} < \infty \Big\rbrace.
\]

If $s=0$ we just write $\ell_{p}$. 
\item If $\Omega = \Z^{n}$, $\mu= \sum_{m \in \Z^{n}}\delta_{\{m\}}$ and $ A = \C$, then 
\[
\bl{p}:= L_{p}(\C, \Z^{n},\mu) :=  \Big\lbrace \lambda=(\lambda_{m})_{m\in\Z^n}\subset \C: \norm{\lambda |\bl{p}} = \Big(\sum_{m \in \Z^{n}}  \abs{\lambda_{m}}^{p} \Big)^{1/p} < \infty \Big\rbrace.
\]
\item If $\Omega = \N_{0}$, $s \in \R$, $\mu= \sum_{j \in \N_{0}} 2^{js}\delta_{\{j\}}$ and $ A = \bl{p}$, then we denote by $\ell_{q}^{s}(\bl{p})$ the space  
\begin{align*}
  \ell_{q}^{s}(\bl{p}):= & L_{q}(\bl{p}; \N_{0}, \mu) \\
  = & \Big\lbrace (\lambda_{j,m})\subset \C: \norm{(\lambda_{j})|\ell_{q}^{s}(\bl{p})} = \Big(\sum_{j \in \N_0} 2^{jsq} \Big(\sum_{m\in \Z^{n}}\abs{\lambda_{j,m}}^{p}\Big)^{q/p} \Big)^{1/q} < \infty \Big\rbrace,
\end{align*}
with the usual modification in case of $q=\infty$.
\end{itemize}
\end{ex}

We recall here two well-known assertions related to the boundedness of the Hardy-Littlewood maximal operator in \eqref{eq:maximal operator} that will be useful later. The first one corresponds to \cite[Formula(1.5)]{AJ}. 
\begin{lem}\label{lem:maximal inequality 1}
Let $ 1 < v < \infty$. Then there is a constant $C_{v} > 0$ such that
\[
\Big(\int_{\R^{n}} \abs{Mf(x)}^{v} w(x) \dint x \Big)^{1/v} \leq C_{v} \Big(\int_{\R^{n}} \abs{f(x)}^{v} w(x) \dint x \Big)^{1/v},
\]
for all $f\in L_{v}(\R^{n},w)$ if, and only if, $ w \in \mathcal{A}_{v}$. 
\end{lem}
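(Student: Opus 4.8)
\emph{Proof idea.} The statement is the classical theorem of Muckenhoupt; the plan is to prove the two implications separately, the necessity of $w\in\mathcal{A}_v$ being soft and the sufficiency being where the real work sits. For the ``only if'' direction I would fix a ball $B=B(x_0,r)$ and test the assumed inequality on the truncations $f_k=\min\{w^{-v'/v},k\}\,\chi_B$, which belong to $L_v(\R^n,w)$ for every $k\in\N$. Since $B\subset B(x,2r)$ and $\abs{B(x,2r)}=2^n\abs{B}$ for every $x\in B$, the definition of $M$ gives $Mf_k(x)\geq \frac{1}{2^n\abs{B}}\int_B f_k$ on $B$; feeding $f_k$ into the assumed inequality and using this lower bound on the left-hand side yields
\[
\Big(\frac{1}{2^n\abs{B}}\int_B f_k\Big)^v w(B)\;\leq\;C_v^v\int_B f_k^v\,w .
\]
As $f_k\leq w^{-v'/v}$ and $(v'/v)(v-1)=1$, one has $f_k^v w\leq f_k$ pointwise, so the right-hand side is at most $C_v^v\int_B f_k$; dividing by $\int_B f_k>0$, then letting $k\to\infty$ by monotone convergence and dividing by $\abs{B}^v$ gives
\[
\frac{w(B)}{\abs{B}}\Big(\frac{1}{\abs{B}}\int_B w^{-v'/v}\Big)^{v-1}\;\leq\;(2^n C_v)^v ,
\]
which, because $v/v'=v-1$, is precisely the $\mathcal{A}_v$ condition (in particular $w^{-v'/v}\in L_1^{\mathrm{loc}}(\R^n)$).

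For the converse I would argue in three steps. Step 1: show that $w\in\mathcal{A}_v$ already yields the \emph{weak-type} bound $w(\{Mf>\lambda\})\leq C\,\lambda^{-v}\int_{\R^n}\abs{f}^v w$. Given a compact subset $K$ of the level set, cover it by balls $B$ with $\frac{1}{\abs{B}}\int_B\abs{f}>\lambda$, extract via the $5r$-covering lemma a finite disjoint subfamily $\{B_i\}$ whose dilates cover $K$, bound $\int_{B_i}\abs{f}$ from below by Hölder's inequality with exponents $v$ and $v'$, and combine with the $\mathcal{A}_v$ inequality to obtain $w(B_i)\leq C\lambda^{-v}\int_{B_i}\abs{f}^v w$; summing over the disjoint $B_i$, using that $\mathcal{A}_v\subset\mathcal{A}_\infty$ weights are doubling to pass from each $B_i$ to its dilate, and taking the supremum over $K$ finishes this step. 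Step 2: invoke the reverse H\"older inequality recalled in the Remark above, i.e.\ the openness of the Muckenhoupt scale, so that $w\in\mathcal{A}_v$ implies $w\in\mathcal{A}_{v-\eps}$ for some $\eps>0$, whence by Step 1 the operator $M$ is of weak type $(v-\eps,v-\eps)$ with respect to the measure $w\dint x$. Step 3: since trivially $\norm{Mf|L_\infty(\R^n)}\leq\norm{f|L_\infty(\R^n)}$, the Marcinkiewicz interpolation theorem on the measure space $(\R^n,w\dint x)$, applied between the weak $(v-\eps,v-\eps)$ estimate and this $L_\infty$ estimate, produces the strong-type inequality $\norm{Mf|L_v(\R^n,w)}\leq C_v\norm{f|L_v(\R^n,w)}$.

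The Hölder and covering bookkeeping, and the tracking of constants, are routine. The genuine obstacle — the only ingredient that is not soft — is the passage from weak type to strong type in the ``if'' direction: it cannot be reached by elementary arguments and relies on the self-improving (reverse H\"older) property of Muckenhoupt weights, equivalently on the openness $\mathcal{A}_v=\bigcup_{\eps>0}\mathcal{A}_{v-\eps}$, which is exactly the phenomenon highlighted in the Remark and encoded in $r_0(w)$. A secondary technical point is that Step 1 invokes the doubling property of $w$, itself a consequence of $w\in\mathcal{A}_v$ rather than something available for free.
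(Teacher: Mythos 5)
Your proposal is correct, but note that the paper does not prove this lemma at all: it is quoted as a known result with a reference to Andersen--John \cite[Formula (1.5)]{AJ}, the theorem itself going back to Muckenhoupt \cite{Muck72a}. What you have written is a faithful sketch of the classical textbook proof (as in Stein or Garc\'{\i}a-Cuerva--Rubio de Francia): necessity by testing on the truncations $\min\{w^{-v'/v},k\}\chi_B$, where the computation $(v'/v)(v-1)=1$ and the monotone convergence step are exactly right and also show $w^{-v'/v}\in L_1^{\mathrm{loc}}$; sufficiency via the weak $(v,v)$ estimate from the Vitali covering argument and H\"older, self-improvement $\mathcal{A}_v\subset\mathcal{A}_{v-\eps}$, and Marcinkiewicz interpolation against the trivial $L_\infty$ bound. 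You also correctly isolate the only genuinely deep ingredient, namely the openness of the Muckenhoupt scale (the reverse H\"older phenomenon the paper records in its Remark and encodes in $r_0(w)$); the one point you gloss over is that the reverse H\"older inequality is usually applied to the dual weight $\sigma=w^{1-v'}\in\mathcal{A}_{v'}$ to deduce $w\in\mathcal{A}_{v-\eps}$, but since the paper's Remark states the openness directly, invoking it as you do is legitimate. Your closing observation that the doubling property used in Step 1 must itself be derived from $w\in\mathcal{A}_v$ is also accurate (alternatively one can apply the $\mathcal{A}_v$ condition directly on the dilated balls $5B_i$ and avoid doubling altogether).
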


The next one can be found in \cite[Theorem 3.1/b]{AJ} or \cite[Remark V.6.5]{GC}.

\begin{lem}\label{lem:maximal inequality 2}
Let $ 1 < r,v < \infty$. Then there is a constant $C_{r,v} > 0$ such that
\[
\Big( \int_{\R^{n}} \Big(\sum_{j\in \N_0} \abs{Mf_{j}(x)}^{r} \Big)^{v/r} w(x) \dint x \Big)^{1/v} \leq C_{r,v} \Big( \int_{\R^{n}} \Big(\sum_{j\in \N_0} \abs{f_{j}(x)}^{r} \Big)^{v/r} w(x) \dint x \Big)^{1/v},
\]
for all $(f_{j}) \in L_{v}(\ell_{r},w)$ if, and only if, $w \in \mathcal{A}_{v}$. 
\end{lem}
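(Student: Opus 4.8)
The plan is to separate the two implications; the forward direction carries all the weight. \emph{The ``only if'' part is immediate:} given $f\in L_v(\R^n,w)$, apply the asserted inequality to the sequence $(f,0,0,\dots)$. All $\ell_r$-sums then collapse to a single term, so the inequality becomes $\norm{Mf|L_v(\R^n,w)}\le C_{r,v}\norm{f|L_v(\R^n,w)}$ for every $f\in L_v(\R^n,w)$; by Lemma~\ref{lem:maximal inequality 1} this forces $w\in\mathcal A_v$.

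\emph{For the ``if'' part} — which is the weighted vector-valued Fefferman--Stein maximal inequality — I would first dispose of the diagonal case $r=v$, which is trivial: by Tonelli and Lemma~\ref{lem:maximal inequality 1} applied term by term,
\[
\int_{\R^n}\Big(\sum_{j\in\No}(Mf_j)^v\Big)w\dint x=\sum_{j\in\No}\int_{\R^n}(Mf_j)^vw\dint x\le C_v^v\int_{\R^n}\Big(\sum_{j\in\No}|f_j|^v\Big)w\dint x .
\]
For general $r$ the plan is to bootstrap from this \emph{single} scalar estimate via Rubio de Francia's extrapolation theorem: $M$ is sublinear and satisfies $\norm{Mf|L_v(w)}\le C([w]_{\mathcal A_v})\,\norm{f|L_v(w)}$ for \emph{every} $w\in\mathcal A_v$, with the standard quantitative dependence on the Muckenhoupt constant, and extrapolation upgrades this to
\[
\Big\|\Big(\sum_{j\in\No}|Mf_j|^r\Big)^{1/r}\,\Big|\,L_p(w)\Big\|\le C\,\Big\|\Big(\sum_{j\in\No}|f_j|^r\Big)^{1/r}\,\Big|\,L_p(w)\Big\|
\]
for all $1<p,r<\infty$ and all $w\in\mathcal A_p$; taking $p=v$ yields the assertion. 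A more hands-on route, essentially that of \cite{AJ}, handles the range $1<r\le v$ by $L_{v/r}(w)$--$L_{(v/r)'}(w)$ duality combined with the dual Fefferman--Stein inequality $\int(Mg)^ru\dint x\le C_r\int|g|^rMu\dint x$ (which itself comes from Marcinkiewicz interpolation with change of measures between the Fefferman--Stein weak-type bound $u(\{Mg>\lambda\})\le C\lambda^{-1}\int|g|\,Mu$ and the trivial $L_\infty$ estimate); the complementary range $r\ge v$ then follows by duality, using $\big(L_p(\ell_r,w)\big)^*=L_{p'}(\ell_{r'},w^{1-p'})$ and $w\in\mathcal A_p\iff w^{1-p'}\in\mathcal A_{p'}$, the reverse Hölder / openness property of the Muckenhoupt classes from the remark around \eqref{r0} bridging the remaining gap.

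\emph{The main obstacle} is exactly the passage from $r=v$ to $r\ne v$. Every naive reduction breaks: the Tonelli splitting needs $r=v$; Minkowski's integral inequality exchanges the $\ell_r$- and $L_v(w)$-norms only in the direction $v\ge r$ at one step and only in the direction $r\ge v$ at the other, so the two applications one would want are incompatible; and the plain $L_{v/r}$-duality argument controls everything by the boundedness of $M$ on $L_{(v/r)'}(w^{1-(v/r)'})$, hence needs $w\in\mathcal A_{v/r}$, which for $r>1$ is strictly stronger than $w\in\mathcal A_v$ (already on $\R^1$, with $v=3$, $r=2$, the power weight $|x|^{3/2}$ lies in $\mathcal A_3\setminus\mathcal A_{3/2}$). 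Closing this gap is precisely the content of the extrapolation theorem — or of the sharper covering arguments of \cite{AJ} (see also \cite[Ch.~V]{GC}) — so I would invoke, or reprove by extrapolation, that classical weighted vector-valued maximal inequality rather than attempt to get by with elementary Hölder and Minkowski manipulations.
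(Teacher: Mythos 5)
Your outline is correct, but there is nothing in the paper to compare it with: the paper does not prove this lemma, it quotes it as a known result with references to \cite[Theorem 3.1/b]{AJ} and \cite[Remark V.6.5]{GC}. Your ``only if'' reduction (apply the inequality to the sequence $(f,0,0,\dots)$ and invoke Lemma~\ref{lem:maximal inequality 1}) is exactly right, and for the ``if'' direction you correctly identify the substance as the weighted vector-valued Fefferman--Stein maximal inequality of Andersen--John. Your extrapolation route is sound: the diagonal case $v=r$, which holds for every $w\in\mathcal{A}_{r}$ by Tonelli and the scalar bound of Lemma~\ref{lem:maximal inequality 1}, is fed into Rubio de Francia extrapolation applied to the pairs $\bigl(\bigl(\sum_{j}|Mf_{j}|^{r}\bigr)^{1/r},\bigl(\sum_{j}|f_{j}|^{r}\bigr)^{1/r}\bigr)$, and this yields the estimate for all $1<v<\infty$ and all $w\in\mathcal{A}_{v}$ --- a standard, if historically later, alternative to the covering/duality argument of \cite{AJ}. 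You are also right that the naive $L_{v/r}$-duality shortcut fails because it would require $w\in\mathcal{A}_{v/r}$, which is strictly stronger than $w\in\mathcal{A}_{v}$ (your example $|x|^{3/2}\in\mathcal{A}_{3}\setminus\mathcal{A}_{3/2}$ on $\R$ is correct), so invoking either extrapolation or the original proof in \cite{AJ} (see also \cite{GC}) is the appropriate resolution; in effect your proposal supplies a proof where the paper is content to cite one.
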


Now we deal with the function spaces we have in mind, i.e., Besov and Triebel-Lizorkin spaces. 
Let $\Sc(\R^{n})$ be the Schwartz space of all complex-valued rapidly decreasing infinitely differentiable functions on $\R^{n}$. By $\Sc'(\R^{n})$ we denote the space of all tempered distributions on $\R^{n}$ and for any $f \in \Sc'(\R^{n})$ we put $\hat{f}$ for its Fourier transform and $f^{\vee}$ for its inverse Fourier transform. 

Let $\varphi_{0} \in \Sc(\R^{n})$ with
\[
\varphi_{0}(x) = 1 \text{ if } \abs{x}\leq 1 \quad \text{and} \quad \varphi_{0}(x) = 1 \text{ if } \abs{x}\geq 3/2,
\] 
and for $ j \in \N$ put $\varphi_{j}(x) = \varphi_{0}(2^{-j}x) - \varphi_{0}(2^{-j+1}x)$, $ x \in \R^{n}$. Since
\[
\sum_{j=0}^{\infty} \varphi_{j}(x) = 1 \quad \text{for all } x \in \R^{n},
\]
the sequence $(\varphi_{j})_{j \in \N_{0}}$ forms a dyadic resolution of unity.
\begin{defi}\label{def:Besov spaces}
Let $ s \in \R$, $ 0 < p,q\leq \infty$. The \emph{Besov space} $B_{p,q}^{s}(\R^{n})$ is formed by all $ f \in \Sc'(\R^{n})$ having a finite quasi-norm
\[
\norm{f|B_{p,q}^{s}(\R^{n})} = \Big(\sum_{j=0}^{\infty} 2^{jsq} \norm{(\varphi_{j} \hat{f})^{\vee}|L_{p}(\R^{n})}^{q} \Big)^{1/q} < \infty,
\]
with the usual modifications if $p=\infty$ and/or $q=\infty$. 
\end{defi}

\noindent{\em Convention.}\quad If $p=q$, sometimes we simply write $B_{p}^{s}(\R^{n})$ instead of $B_{p,p}^{s}(\R^{n})$. 

  \blue{
\begin{rmk}\label{rem-Besov}
The  spaces $B_{p,q}^s(\mathbb{R}^n)$ are
independent of the particular choice of the smooth dyadic resolution of unity
appearing in their definition. They are quasi-Banach spaces
(Banach spaces for $p,q\geq 1$), and $ \mathcal{S}(\R^n) \hookrightarrow B^s_{p,q}(\R^n)\hookrightarrow \mathcal{S}'(\R^n)$, where the first embedding is dense if $\magenta{0< p,q<\infty}$; we refer, in particular, to the series of monographs \cite{T83,T92,T06} for a comprehensive treatment of the spaces. 
There is a parallel approach when interchanging in the above norm the $L_p$ and $\ell_q$ norm, this leads to the scale of Triebel-Lizorkin spaces $F^s_{p,q}(\R^n)$. We postpone their formal definition to the next section when we shall deal with their weighted counterparts.
\end{rmk}
}

Now we review some results related to the wavelet representation of Besov spaces $B_{p,q}^{s}(\R^{n})$. We follow mainly the notation in \cite[Section 1.2.2]{T08} and \cite{HST}.

For $ u \in \N$, let $C^{u}(\R^{n})$ be the space of all complex-valued continuous functions on $\R$ having continuous bounded derivatives up to order $u$ inclusively. Let $\psi_{F} \in C^{u}(\R)$ and $\psi_{M} \in C^{u}(\R)$ be real compactly supported Daubechies wavelets with 
\[
\int_{\R} \psi_{M}(x)x^{v} \dint x=0 \quad \text{ for all } v \in \N_{0}, \ v < u.
\]
We enumerate the set $\{F,M\}^{n} =\{G_{1},\dots,G_{2^n}\}$ with $G_{1}=(F,F,\dots,F)$ and $G_{\ell} = (G_{\ell}^{1},\dots,G_{\ell}^{n})$ where $G_{\ell}^{r} \in \{F,M\}$ for all $r=1,\dots,n$ and $\ell = 2,3,\dots,2^{n}$. Put
\begin{equation}\label{eq:wavelets}
\psi_{m}(x) = \prod_{r=1}^{n} \psi_{F}(x_{r}-m_{r}) \quad \text{and} \quad \psi_{\ell,m}^{j}(x) = 2^{jn/2} \prod_{r=1}^{n} \psi_{G_{\ell}^{r}}(2^{j}x_{r}-m_{r}),
\end{equation}
for $j \in \N_{0}$, $m = (m_{1},\dots,m_{n}) \in \Z^{n}$, $\ell = 2,3,\dots,2^{n}$ and $ x \in \R^{n}$.

For any $ j \in \N_0$ and $m \in \Z^{n}$ we define
\[
Q_{j,m} = Q_{j,m}^{(n)} = 2^{-j}m+2^{-j-1}(-1,1)^{n} = \prod_{r=1}^{n} (2^{-j}m_{r}-2^{-j-1}, 2^{-j}m_{r}+2^{-j-1}),
\]
that is to say, dyadic cubes centered at $2^{-j}m$ of side length $2^{-j}$. We denote by $\chi_{j,m}$ the characteristic function of $Q_{j,m}$.
\begin{rmk}\label{rmk:wavelet support}
Observe that as $\psi_{F}$ and $\psi_{M}$ are compactly supported, there exists $C> 0$ such that $\supp \psi_{F} \subset (-C,C)$ and $\supp \psi_{M} \subset (-C, C)$. Therefore
\[
\supp \psi_{m} \subset 2CQ_{0,m} \quad \text{and} \quad \supp \psi_{\ell,m}^{j} \subset 2C Q_{j,m}^{\ell},
\]
for every $j\in \N_{0}$, $m \in \Z^{n}$ and $\ell = 2,3,\dots,2^{n}$.
\end{rmk}
Now we introduce the sequence space related to the wavelet representation of $B_{p,q}^{s}(\R^{n})$. 
\begin{defi}\label{def:Besov sequence space}
Let $ s \in \R$ and $ 0 < p,q \leq \infty$. We define $b_{p,q}^{s}(\R^{n})$ as the collection of all sequences 
\[\lambda = \Big((\lambda_{m})_{m \in \Z^{n}}, (\lambda_{m}^{j,2})_{\substack{j \in \N_{0} \\ m \in \Z^{n}}}, \dots, (\lambda_{m}^{j,2^{n}})_{\substack{j \in \N_{0} \\ m \in \Z^{n}}} \Big), \]
quasi-normed by 
\[
\norm{\lambda|b_{p,q}^{s}(\R^{n})} = \Big(\sum_{m\in \Z^{n}} \abs{\lambda_{m}}^{p}\Big)^{1/p} + \sum_{\ell=2}^{2^{n}} \Big( \sum_{j \in \N_0} 2^{j(s-n/p)q} \Big(\sum_{m \in \Z^{n}}  |\lambda_{m}^{j,\ell}|^{p} \Big)^{q/p} \Big)^{1/q}
\]
with the usual modifications if $p=\infty$ and/or $q=\infty$.
\end{defi}

For $\alpha \in \R$ we put $\alpha_{+} = \max\{\alpha,0\}$. The following characterization of Besov spaces $B_{p,q}^{s}(\R^{n})$ in terms of wavelets can be found in \cite[Theorem 1.20]{T08}.
\begin{thm}\label{thm:wavelet Besov spaces}
Let $ s \in \R$, $0 < p,q \leq \infty$ and let $\{\psi_{m}, \psi_{\ell,m}^{j}: m \in \Z^{n}, \ j \in \N_0, \ \ell = 2,3,\dots,2^{n}\}$ be the wavelets in \eqref{eq:wavelets} with $u > \max\{s, n\Big(\frac{1}{p}-1\Big)_{+}-s\}$. Let $f \in \Sc'(\R^{n})$. Then $f \in B_{p,q}^{s}(\R^{n})$ if, and only if, it can be represented as
\[
f = \sum_{m\in \Z^{n}} \lambda_{m} \psi_{m} + \sum_{\ell=2}^{2^{n}} \sum_{j\in \N_{0}}\sum_{m \in \Z^{n}} \lambda_{m}^{j,\ell} 2^{-jn/2} \psi_{\ell,m}^{j},\quad \text{with}\quad \lambda \in b_{p,q}^{s}(\R^{n}),
\]
unconditional convergence being in $\Sc'(\R^{n})$. The representation is unique with $\lambda_{m} = (f,\psi_{m})$ and $\lambda_{m}^{j,\ell} = 2^{jn/2} (f, \psi_{\ell,m}^{j})$ and the operator 
\[I: f \rightarrow \Big((\lambda_{m})_{m \in \Z^{n}}, (\lambda_{m}^{j,2})_{\substack{j \in \N_{0} \\ m \in \Z^{n}}}, \dots, (\lambda_{m}^{j,2^{n}})_{\substack{j \in \N_{0} \\ m \in \Z^{n}}} \Big) \]
is an isomorphism from $B_{p,q}^{s}(\R^{n})$ onto $b_{p,q}^{s}(\R^{n})$. 
\end{thm}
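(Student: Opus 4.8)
The plan is to prove the two inclusions separately and then read off uniqueness and the isomorphism property from the orthonormality of the Daubechies system in $L_2(\Rn)$, in the by-now standard way. \emph{Synthesis.} First I would show that if $\lambda\in b_{p,q}^{s}(\Rn)$ then the series
\[
f=\sum_{m\in\Z^n}\lambda_m\psi_m+\sum_{\ell=2}^{2^n}\sum_{j\in\No}\sum_{m\in\Z^n}\lambda_m^{j,\ell}\,2^{-jn/2}\psi_{\ell,m}^{j}
\]
converges unconditionally in $\Sc'(\Rn)$ and its sum belongs to $\B(\Rn)$ with $\norm{f|\B(\Rn)}\leq c\,\norm{\lambda|b_{p,q}^{s}(\Rn)}$. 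By Remark~\ref{rmk:wavelet support} each $2^{-jn/2}\psi_{\ell,m}^{j}$ is supported in a fixed dilate of $Q_{j,m}$, has derivatives up to order $u$ bounded by $c\,2^{j|\beta|}$, and — crucially for $\ell\ge2$, since then at least one tensor factor is $\psi_M$ — has vanishing moments up to order $u$; after the normalisation built into the statement these are, up to a constant, $(s,p)$-atoms, provided $u>s$ (smoothness) and $u>n(\tfrac1p-1)_{+}-s$ (moments), both guaranteed by the hypothesis on $u$. The coarse-scale terms $\lambda_m\psi_m$ need no moment condition and play the role of the $j=0$ atoms. The atomic representation theorem for Besov spaces then yields both the convergence in $\Sc'(\Rn)$ and the norm estimate.

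\emph{Analysis.} Conversely, for $f\in\B(\Rn)$ I would show that the numbers $\lambda_m=(f,\psi_m)$ and $\lambda_m^{j,\ell}=2^{jn/2}(f,\psi_{\ell,m}^{j})$ form a sequence $\lambda\in b_{p,q}^{s}(\Rn)$ with $\norm{\lambda|b_{p,q}^{s}(\Rn)}\leq c\,\norm{f|\B(\Rn)}$. This is the heart of the matter: one reads $2^{jn/2}(f,\psi_{\ell,m}^{j})$ as a local mean of $f$ with kernel a rescaled $\psi_{\ell,m}^{j}$ and checks that these kernels are admissible for the local-means characterisation of $\B(\Rn)$ — they are compactly supported, $C^u$, and (for $\ell\ge2$) carry enough vanishing moments, while the needed Tauberian condition is covered since the full wavelet system reproduces every $f$. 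The local-means theorem then estimates the discretised $L_p(\ell_q)$-type expression in the coefficients by $\norm{f|\B(\Rn)}$, which is exactly $\norm{\lambda|b_{p,q}^{s}(\Rn)}$ after relabelling the summation indices.

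\emph{Uniqueness, isomorphism, and the main obstacle.} Since $\{\psi_m,\psi_{\ell,m}^{j}\}$ is an orthonormal basis of $L_2(\Rn)$, the reproduction identity $f=\sum_m(f,\psi_m)\psi_m+\sum_{\ell,j,m}(f,\psi_{\ell,m}^{j})\psi_{\ell,m}^{j}$ holds on the dense subspace $\Sc(\Rn)\emb\B(\Rn)$ (dense for $0<p,q<\infty$; the cases $p=\infty$ or $q=\infty$ follow by a Fatou-type, respectively duality, argument in $\Sc'(\Rn)$), and both sides are continuous from $\B(\Rn)$ into $\Sc'(\Rn)$ by the two halves above, so the identity extends to all of $\B(\Rn)$; this gives the claimed representation, uniqueness of the coefficients follows from orthonormality, and combining the two norm estimates shows that $I$ is a topological isomorphism of $\B(\Rn)$ onto $b_{p,q}^{s}(\Rn)$ with the stated inverse. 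I expect the delicate point to be the analysis direction for small $p$ (and for the relevant range of $s$): one needs sufficiently many vanishing moments for the wavelet kernels to qualify in the local-means characterisation, and one must justify unconditional $\Sc'$-convergence when $f$ is a genuine distribution rather than a locally integrable function — both being exactly what the assumption $u>\max\{s,\,n(\tfrac1p-1)_{+}-s\}$ is designed to secure.
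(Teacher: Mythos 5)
The paper does not prove this theorem at all — it is quoted from \cite[Theorem 1.20]{T08} — and your sketch follows precisely the standard argument of that source: synthesis by treating the rescaled wavelets $2^{-j(s-n/p)}2^{-jn/2}\psi^{j}_{\ell,m}$ as $(s,p)$-atoms (smoothness $u>s$, moments $u>n(\frac1p-1)_{+}-s$), analysis by reading $2^{jn/2}(f,\psi^{j}_{\ell,m})$ as local means, and uniqueness/isomorphism from biorthogonality. So your proposal is correct as a sketch and essentially coincides with the cited proof; the only points left implicit (the meaning of the pairing $(f,\psi^{j}_{\ell,m})$ for a genuine distribution, and the non-dense cases $p=\infty$ or $q=\infty$) are exactly the technicalities settled in that reference.
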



\section{Triebel-Lizorkin spaces with weight $w_{\alpha}$}\label{F-weight}
Weighted Triebel-Lizorkin spaces have been systematically studied in \cite{Bui82,Bui84} with subsequent papers \cite{Bui96, Bui97}. We focus here on Triebel-Lizorkin spaces $F_{p,q}^{s}(\R^{n},w_{\alpha})$ where $w_{\alpha}$ is the weight defined in \eqref{eq:weight w_alpha}. The spaces $F_{p,q}^{s}(\R^{n},w_{\alpha})$ are also a particular case of the spaces studied in \cite{HP,P}.
\begin{defi}
Let $0 < p < \infty$, $0 < q \leq \infty$, $\alpha > -1$, $s \in \R$ and $(\varphi_{j})$ a smooth dyadic resolution of unity. \emph{The weighted Triebel–Lizorkin space} $F_{p,q}(\R^{n},w_{\alpha})$ is the set of all distributions $f \in \Sc'(\R^{n})$ such that
\[
\norm{f|F_{p,q}^{s}(\R^{n},w_{\alpha})} = \Big\|\Big(\sum_{j=0}^{\infty} 2^{jsq} \abs{(\varphi_{j} \hat{f})^{\vee}(\cdot)}^{q} \Big)^{1/q}|L_{p}(\R^{n}, w_{\alpha})\Big\|< \infty
\]
with the usual modification if $q = \infty$.
\end{defi}

\begin{rmk}\label{rem-Triebel}
\blue{The  spaces $F_{p,q}^s(\mathbb{R}^n,w)$ (as well as their Besov space counterparts) are
independent of the particular choice of the chosen smooth dyadic resolution of unity, they are quasi-Banach spaces (Banach spaces for $p,q\geq 1$), and the embedding of $\mathcal{S}(\R^n)$ is dense in $F^s_{p,q}(\R^n,
w)$ for $q<\infty$. In case of $w\in\mathcal{A}_\infty$ these spaces have been studied first by Bui in \cite{Bui82,Bui84}, with
subsequent papers \cite{Bui96,Bui97}. It turned out that many of the
results from the unweighted situation have weighted counterparts: e.g., we
have $F^0_{p,2}(\R^n,w) = h_p(\R^n,w)$, $0<p<\infty$, where  the latter are
Hardy spaces, see \cite{Bui82}, and, in particular, $h_p(\R^n, w) =
L_p(\R^n, w) = F^0_{p,2}(\R^n,w)$, $ 1<p<\infty$, $\ w\in \mathcal{A}_p$. Concerning (classical) Sobolev spaces $W^k_p(\R^n, w) $ (built
upon $L_p(\R^n, w)$ in the usual way) it holds  $W^k_p(\R^n, w) = F^k_{p,2}(\R^n,
w)$, $k\in\N_0$, $1<p<\infty$, $w\in \mathcal{A}_p$, cf. \cite{Bui82}. 
}

  Observe that if $\alpha=0$, the spaces $F_{p,q}^{s}(\R^{n}, w_{0})$ coincide with the classical Triebel-Lizorkin spaces $F_{p,q}^{s}(\R^{n})$, briefly mentioned in Remark~\ref{rem-Besov} already. 
\end{rmk}

\subsection{Atomic decomposition of spaces $F_{p,q}^{s}(\R^{n},w_{\alpha})$}
Now we recall the atomic decomposition of spaces $F_{p,q}^{s}(\R^{n},w_{\alpha})$ given in \cite{HP}.
\begin{defi}\label{def:atom}
\begin{enumerate}[\bfseries (a)]
\item Suppose that $K \in \N_{0}$ and $b > 1$. The complex-valued function $ a \in C^{K}(\R^{n})$ is said to be an \emph{$1_{K}$-atom} if the following assumptions are satisfied: 
\begin{enumerate}[(i)]
\item $\supp a \subset b Q_{0,m}$ for some $ m \in \Z^{n}$, 
\item $\abs{\Dd^{\beta} a(x)} \leq 1$ for $\abs{\beta} \leq K$, $ x \in \R^{n}$. 
\end{enumerate}
\item Suppose that $s \in \R$, $ 0 < p \leq \infty$, $K, L\in \N_0$ and $b > 1$. The complex valued function $ a \in C^{K}(\R^{n})$ is said to be an \emph{$(s,p)_{K,L}$-atom} if for some $j \in \N_0$ the following assumptions are satisfied
\begin{enumerate}[(i)]
\item $\supp a \subset b Q_{j,m}$ for some $m \in \Z^{n}$, 
\item $\abs{\Dd^{\beta}a(x)} \leq 2^{-j(s-n/p) + \abs{\beta}j}$ for $\abs{\beta} \leq K$ and $ x \in \R^{n}$,
\item $\int_{\R^{n}} x^{\gamma} a(x) \dint x =0$ for $\abs{\gamma} < L$. 
\end{enumerate}
\end{enumerate}
\blue{In the sequel we write $a_{j,m}$ instead of $a$ if the atom is located at  $Q_{j,m}$, i.e.,  $\supp a_{j,m}\subset bQ_{j,m}$, $j\in\N_0$, $m\in\Z^n$.}
\end{defi}

Our aim is some decomposition of elements from $F^s_{p,q}(\R^n, w_\alpha)$ by atoms, similar to the wavelet decomposition recalled in Theorem~\ref{thm:wavelet Besov spaces}. For that reason we also need 

\begin{defi}\label{defi:weighted T-L sequences for atoms}
Let $ 0 < p < \infty$, $ 0 < q \leq \infty$ and $ \alpha > -1$. We define $f_{p,q}(\R^{n},w_{\alpha})$ as the set of all sequences $(\lambda_{j,m})$ of complex numbers with finite quasi-norm
\[
\norm{\lambda|f_{p,q}(\R^n, w_{\alpha})} = \Big\|\Big(\sum_{j \in \N_{0}} \sum_{m \in \Z^{n}} \abs{\lambda_{j,m} \chi_{j,m}^{(p)} (\cdot)}^{q} \Big)^{1/q}|{L_{p}(\R^{n}, w_{\alpha})}\Big\| 
\]
where $\chi_{j,m}^{(p)}(x) = 2^{jn/p} \chi_{j,m}(x)$. 
\end{defi}
Subsequently, given an arbitrary index set $I$ and two sets of positive numbers $\{a_{i}: i \in I\}$ and $\{b_{i}: i \in I\}$, we write $a_{i} \lesssim b_{i}$ if there is a positive constant $c$ such that $a_{i}\leq c b_{i}$ for all $i \in I$. We put $a_{i} \sim b_{i}$ if $a_{i}\lesssim b_{i}$ and $b_{i}\lesssim a_{i}$. \blue{Recall our notation $r_0(w_\alpha)$ as introduced in \eqref{r0}, see also Example~\ref{Ex-walpha}.}

\begin{thm}\label{thm:atomic decomp}
Let $0 < p < \infty$, $ 0 < q \leq \infty$, $ s \in \R$ and $ \alpha >-1$. Let $K, L\in \N_0$ with $K > s$ and $L > n \Big( \frac{1}{\min(\frac{p}{r_0(w_{\alpha})},q)}-1 \Big)_{+}$. A tempered distribution $f \in \Sc'(\R^{n})$ belongs to $F_{p,q}^{s}(\R^{n}, w_{\alpha})$ if, and only if, it can be written as a series 
\begin{align}\label{eq:atomic decomp}
f (x) = \sum_{j\in\N_0} \sum_{m \in \Z^{n}} \lambda_{j,m} a_{j,m}(x) \text{ converging in } \Sc'(\R^{n}), 
\end{align}
where $a_{j,m}(x)$ are $1_{K}$-atoms ($j=0$) or $(s,p)_{K,L}$-atoms ($j \in \N$) and $\lambda \in f_{p,q}(\R^n, w_{\alpha})$. Furthermore
\[
\norm{f|F_{p,q}^{s}(\R^{n},w_{\alpha})} \sim \inf \{ \norm{\lambda|f_{p,q}(\R^{n}, w_{\alpha})}\},
\]
where the infimum is taken over all admissible representations \eqref{eq:atomic decomp}. 
\end{thm}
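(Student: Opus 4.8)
The plan is to follow the now-standard Frazier--Jawerth/Triebel scheme for atomic decompositions, reducing everything to the weighted maximal inequalities of Lemmas~\ref{lem:maximal inequality 1} and \ref{lem:maximal inequality 2}; indeed the statement is exactly the special case $w=w_{\alpha}$ of the general theorem in \cite{HP} (see also \cite{P}), so in practice one simply cites it, but a self-contained proof would run as follows. I would treat the two implications separately: the \emph{synthesis} direction (for every $\lambda\in f_{p,q}(\R^{n},w_{\alpha})$ and every matching family of atoms, the series \eqref{eq:atomic decomp} converges in $\Sc'(\R^{n})$ to an $f\in F_{p,q}^{s}(\R^{n},w_{\alpha})$ with $\norm{f|F_{p,q}^{s}(\R^{n},w_{\alpha})}\lesssim\norm{\lambda|f_{p,q}(\R^{n},w_{\alpha})}$), and the \emph{analysis} direction (every $f\in F_{p,q}^{s}(\R^{n},w_{\alpha})$ admits such a representation with control of the coefficients).

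For the synthesis direction I would write $f=\sum_{j\in\No}f^{j}$ with $f^{j}=\sum_{m}\lambda_{j,m}a_{j,m}$, fix a resolution of unity $(\varphi_{k})$, and estimate $(\varphi_{k}\hat f)^{\vee}=\sum_{j}(\varphi_{k}\widehat{f^{j}})^{\vee}$ pointwise. The decisive step is the classical ``two regimes'' estimate: for $j\ge k$ via the smoothness/size bounds \textbf{(ii)} of the atoms (controlled by $K$) and for $j<k$ via their cancellation \textbf{(iii)} (controlled by $L$), in both cases in the usual way by Taylor expansion of the kernel $(\varphi_{k})^{\vee}$ and the localisation \textbf{(i)}. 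This should give, for every sufficiently small auxiliary exponent $t>0$,
\[
2^{ks}\,\abs{(\varphi_{k}\widehat{f^{j}})^{\vee}(x)}\;\lesssim\;2^{-\abs{k-j}\,\sigma}\,
\Big[\,M\Big(\Big(\sum_{m\in\Z^{n}}\abs{\lambda_{j,m}}\,\chi_{j,m}\Big)^{t}\Big)(x)\,\Big]^{1/t},\qquad x\in\R^{n},
\]
with a decay exponent $\sigma=\sigma(s,K,L,n,t)>0$ precisely when $K>s$ and $L>n(\tfrac1t-1)_{+}$. Then I would take the $\ell_{q}$-quasi-norm in $k$, sum the geometric factor $2^{-\abs{k-j}\sigma}$ (Minkowski's inequality if $q\ge1$, the $q$-triangle inequality if $q<1$), raise to the power $t$ and apply Lemma~\ref{lem:maximal inequality 2} with $v=p/t$ and $r=q/t$; this application is legitimate exactly when $p/t>1$, $q/t>1$ and $w_{\alpha}\in\mathcal A_{p/t}$, i.e.\ when $t<\min\{\,q,\,p/r_{0}(w_{\alpha})\,\}$, and the hypothesis $L>n\big(\tfrac{1}{\min\{p/r_{0}(w_{\alpha}),q\}}-1\big)_{+}$ is precisely what lets me pick such a $t$ while still keeping $\sigma>0$. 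Convergence of \eqref{eq:atomic decomp} in $\Sc'(\R^{n})$ I would get from the same pointwise bounds, pairing the series with $\phi\in\Sc(\R^{n})$ and using the moment conditions \textbf{(iii)} on the $a_{j,m}$, $j\ge1$, to gain a factor $2^{-j(L+n)}$ against the Taylor remainder of $\phi$, the single term $j=0$ being harmless.

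For the analysis direction I would start from a Calder\'on reproducing formula subordinate to the Littlewood--Paley decomposition, $f=\sum_{j,m}\mu_{j,m}\psi_{j,m}$, where the $\psi_{j,m}$ are dilates and translates of one Schwartz function $\psi$ with $\hat\psi$ supported in an annulus --- so $\psi$ has vanishing moments of every order and $\abs{\Dd^{\beta}\psi_{j,m}}\lesssim2^{j(n/2+\abs{\beta})}$ --- and $\mu_{j,m}$ is a multiple of the local mean $2^{jn/2}\langle f,2^{jn/2}\varphi(2^{j}\cdot-m)\rangle$. After renormalising, $a_{j,m}:=c\,2^{-j(s-n/p)-jn/2}\psi_{j,m}$, the building blocks are smooth \emph{molecules} carrying the size bounds \textbf{(ii)} and all moments \textbf{(iii)}, and on this side any $K,L$ will do. That the resulting coefficient sequence lies in $f_{p,q}(\R^{n},w_{\alpha})$ with the right bound I would deduce from the Peetre maximal-function characterisation of $F_{p,q}^{s}(\R^{n},w_{\alpha})$ together with Lemma~\ref{lem:maximal inequality 2} (with an auxiliary exponent $t$ chosen as above); the convergence of the Calder\'on formula in $\Sc'(\R^{n})$ is routine since $F_{p,q}^{s}(\R^{n},w_{\alpha})\hookrightarrow\Sc'(\R^{n})$. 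Passing from these molecules to genuinely compactly supported $1_{K}$- and $(s,p)_{K,L}$-atoms as in Definition~\ref{def:atom} is the usual final manoeuvre: replace $\psi$ by a compactly supported $C^{K}$ function and correct the finitely many spoilt cancellations, or invoke the known equivalence of molecular and atomic decompositions (cf.\ \cite{HP,T08}).

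The hard part is the first, pointwise, step and its bookkeeping: one must extract a strictly positive decay exponent $\sigma$ uniformly across all parameter regimes ($p<1$, $q\le1$, $q=\infty$), so that after the maximal inequality the thresholds come out to be exactly $K>s$ and $L>n\big(\tfrac{1}{\min\{p/r_{0}(w_{\alpha}),q\}}-1\big)_{+}$. Here it is essential that $w_{\alpha}$ is only assumed to lie in $\mathcal A_{\infty}$: when $\alpha>0$ one genuinely has $r_{0}(w_{\alpha})=\alpha+1>1$, so the weighted vector-valued maximal inequality is available only for $v=p/t>\alpha+1$, and this is exactly what forces $p/r_{0}(w_{\alpha})$ rather than $p$ into the minimum, hence the weight-dependent lower bound on the number $L$ of vanishing moments. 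All remaining ingredients --- the Calder\'on formula, the Peetre maximal characterisation, the molecule-to-atom reduction --- are standard, but in the weighted setting each has to be re-run with the Muckenhoupt maximal inequalities of Lemmas~\ref{lem:maximal inequality 1} and \ref{lem:maximal inequality 2} in place of the unweighted ones.
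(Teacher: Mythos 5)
The paper offers no proof of this theorem beyond a citation of \cite[Theorem~3.10]{HP} (see also \cite{BoHo}), and your sketch is essentially a reconstruction of that standard weighted Frazier--Jawerth/Triebel argument: the reduction to Lemma~\ref{lem:maximal inequality 2} with $v=p/t$, $r=q/t$ and an auxiliary exponent $t<\min\{p/r_0(w_\alpha),q\}$ is exactly where the weight enters and where the threshold on $L$ comes from, so the approach is sound and matches the cited source. One small slip in the synthesis step: the two regimes are interchanged --- for $k\ge j$ one Taylor-expands the \emph{atom} and uses its $C^{K}$-smoothness (ii) to gain $2^{-(k-j)K}$, whereas for $k<j$ one Taylor-expands the kernel $(\varphi_{k})^{\vee}$ and uses the $L$ vanishing moments (iii) of the atom to gain $2^{-(j-k)(L+n)}$; your attribution of (ii) to $j\ge k$ and of (iii) to $j<k$ is the wrong way round, though this is purely a labelling error and does not affect the argument.
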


\blue{We refer to \cite[Theorem~3.10]{HP} for a proof, see also \cite{BoHo}.}

\begin{prop}\label{prop:equivalent f norm}
Let $ 0 < p < \infty$, $ 0 < q \leq \infty$, $\alpha > -1$ and let $(E_{j,m})$ be a sequence of Lebesgue measurable sets on $\R^{n}$ each of them included in the corresponding dyadic cube $Q_{j,m}$ and satisfying that $\abs{E_{j,m}} \sim \abs{Q_{j,m}}$ for every $j \in \N_0$ and $m \in \Z^{n}$. Then
\[
\norm{\lambda | f_{p,q}(\R^{n},w_{\alpha})} \sim \Big\|\Big(\sum_{j\in\N_0} \sum_{m \in \Z^{n}} \abs{\lambda_{j,m}}^{q} 2^{jnq/p} \chi_{E_{j,m}}(\cdot) \Big)^{1/q} |L_{p}(\R^{n},w_{\alpha})\Big\|,
\]
where $\chi_{E_{j,m}}$ stands for the characteristic function of the set $E_{j,m}$.
\end{prop}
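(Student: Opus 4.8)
The estimate ``$\gtrsim$'' is immediate. Since $E_{j,m}\subseteq Q_{j,m}$ we have $\chi_{E_{j,m}}\le\chi_{j,m}$ pointwise, so, recalling from Definition~\ref{defi:weighted T-L sequences for atoms} that $\chi_{j,m}^{(p)}=2^{jn/p}\chi_{j,m}$ and using $\chi_{j,m}^{q}=\chi_{j,m}$, the function under the $L_p(\R^n,w_\alpha)$ quasi-norm on the right-hand side is pointwise dominated by the one defining $\norm{\lambda\,|\,f_{p,q}(\R^n,w_\alpha)}$ — with suprema replacing the $\ell_q$-sums when $q=\infty$ — and monotonicity of the $L_p(\R^n,w_\alpha)$ quasi-norm gives the bound. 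The content of the proposition is the reverse inequality.

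The first genuine step I would carry out is the pointwise estimate
\[
\chi_{j,m}(x)\ \le\ C\,M\chi_{E_{j,m}}(x),\qquad x\in\R^n,\ j\in\N_0,\ m\in\Z^n,
\]
with $C$ depending only on $n$ and on the equivalence constant in $\abs{E_{j,m}}\sim\abs{Q_{j,m}}$. Indeed, if $x\in Q_{j,m}$ then the ball $B=B(x,\sqrt{n}\,2^{-j})$ contains $Q_{j,m}\supseteq E_{j,m}$, whence $M\chi_{E_{j,m}}(x)\ge\abs{E_{j,m}}/\abs{B}\gtrsim\abs{Q_{j,m}}/\abs{B}\sim1$, uniformly in $j$ and $m$; and $\chi_{j,m}$ is bounded by $1$ and supported in $Q_{j,m}$, so it vanishes off $Q_{j,m}$.

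Then I would fix an auxiliary exponent $\varrho$ with $0<\varrho<\min\{q,\,p/r_0(w_\alpha)\}$ (reading $\min\{\infty,\,\cdot\}=\cdot$), so that $v:=p/\varrho>r_0(w_\alpha)\ge1$, hence $v>1$ and $w_\alpha\in\mathcal{A}_v$ by Example~\ref{Ex-walpha}, and — when $q<\infty$ — also $r:=q/\varrho>1$. Using $\chi^{t}=\chi$ for characteristic functions and the homogeneity of the quasi-norms, for $q<\infty$ one rewrites
\[
\norm{\lambda\,|\,f_{p,q}(\R^n,w_\alpha)}=\Big\|\Big(\sum_{j,m}g_{j,m}^{\,q/\varrho}\Big)^{\varrho/q}\,\Big|\,L_{p/\varrho}(\R^n,w_\alpha)\Big\|^{1/\varrho},\qquad g_{j,m}:=\abs{\lambda_{j,m}}^{\varrho}\,2^{jn\varrho/p}\,\chi_{j,m},
\]
and the right-hand side of the proposition equals the same expression with $\chi_{j,m}$ replaced by $\chi_{E_{j,m}}$; denote those functions by $G_{j,m}:=\abs{\lambda_{j,m}}^{\varrho}2^{jn\varrho/p}\chi_{E_{j,m}}$. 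Since $M(c\,h)=c\,M(h)$ for $c\ge0$, the pointwise bound of the previous paragraph gives $g_{j,m}\le C\,M(G_{j,m})$; substituting this and applying the vector-valued weighted maximal inequality, Lemma~\ref{lem:maximal inequality 2}, with exponents $r=q/\varrho$, $v=p/\varrho$ and the weight $w_\alpha\in\mathcal{A}_{p/\varrho}$, bounds the displayed quantity by a constant times the same expression with $g_{j,m}$ replaced by $G_{j,m}$, i.e.\ by a constant times the right-hand side of the proposition. For $q=\infty$ the argument is the same after replacing the $\ell_q$-sums by suprema: one additionally uses $\sup_{j,m}M(G_{j,m})\le M(\sup_{j,m}G_{j,m})$ (monotonicity of $M$) and the scalar maximal inequality Lemma~\ref{lem:maximal inequality 1} with $v=p/\varrho$.

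The only delicate point is the choice of $\varrho$: because $p$ and $q$ are allowed to be arbitrarily small while $w_\alpha$ lies in $\mathcal{A}_v$ only for $v>r_0(w_\alpha)$, the maximal inequalities cannot be applied to the original $L_p(\R^n,w_\alpha)$- and $\ell_q$-norms, and one is forced to pass to $\varrho$-th powers first; the constraint $\varrho<p/r_0(w_\alpha)$ is precisely what makes $w_\alpha\in\mathcal{A}_{p/\varrho}$ available, and $\varrho<q$ keeps the inner exponent $q/\varrho$ admissible in Lemma~\ref{lem:maximal inequality 2}. Everything else is bookkeeping with characteristic functions and quasi-norm homogeneity, and all implicit constants depend only on $n$, $p$, $q$, $\alpha$ and the equivalence constant in $\abs{E_{j,m}}\sim\abs{Q_{j,m}}$.
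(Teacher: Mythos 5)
Your argument is correct and coincides with the paper's own proof: the easy direction by monotonicity, the pointwise bound $\chi_{j,m}\lesssim M\chi_{E_{j,m}}$, the passage to $A$-th powers with $0<A<\min(p/r_0(w_\alpha),q)$ so that $w_\alpha\in\mathcal{A}_{p/A}$, and then Lemma~\ref{lem:maximal inequality 2} (resp.\ Lemma~\ref{lem:maximal inequality 1} with the supremum trick when $q=\infty$). The only difference is cosmetic: you spell out the verification of $\chi_{j,m}\le C\,M\chi_{E_{j,m}}$ and state the exponents $r=q/A$, $v=p/A$ explicitly, which the paper leaves implicit.
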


\begin{proof}
We follow the ideas in the proof of \cite[Proposition 2.7]{FJ}. As $E_{j,m} \subset Q_{j,m}$ for every $j \in \N_{0}$ and $m \in \Z^{n}$, it is straightforward that
\[
\Big\|\Big( \sum_{j\in \N_0} \sum_{m \in \Z^{n}} \abs{\lambda_{j,m}}^{q} 2^{jnq/p} \chi_{E_{j,m}}(\cdot) \Big)^{1/q}|L_{p}(\R^{n},w_{\alpha}) \Big\| \leq \norm{\lambda|f_{p,q}(\R^{n},w_{\alpha})}.
\]
We prove now the reverse inequality. Note that for every $j \in \N_0$ and $m \in \Z^{n}$, $\chi_{j,m} \lesssim M (\chi_{E_{j,m}})$.  We assume first that $ 0 < q < \infty$. Taking $ 0 < A < \min\Big(\frac{p}{r_{0}(w_{\alpha})}, q\Big)$ and applying Lemma \ref{lem:maximal inequality 2}, we obtain that
\begin{align*}
&\Big\| \Big( \sum_{j\in\N_0} \sum_{m \in \Z^{n}} \abs{\lambda_{j,m}}^{q} 2^{jnq/p} \chi_{{j,m}}(\cdot) \Big)^{1/q} |L_{p}(\R^{n},w_{\alpha})\Big\| \\
=&  \Big\| \Big( \sum_{j\in\N_0} \sum_{m \in \Z^{n}} (\abs{\lambda_{j,m}}^{A} 2^{jnA/p})^{q/A} \chi_{j,m}(\cdot) \Big)^{A/q}|L_{p/A}(\R^{n},w_{\alpha}) \Big\|^{1/A}\\
\lesssim & \Big\| \Big( \sum_{j\in\N_0} \sum_{m \in \Z^{n}} (\abs{\lambda_{j,m}}^{A} 2^{jnA/p})^{q/A} M\chi_{E_{j,m}}(\cdot)\Big)^{A/q}|L_{p/A}(\R^{n}, w_{\alpha}) \Big\|^{1/A}\\
= & \Big\| \Big( \sum_{j\in\N_0} \sum_{m \in \Z^{n}} M\Big((\abs{\lambda_{j,m}}^{A} 2^{jnA/p})^{q/A} \chi_{E_{j,m}}\Big)(\cdot) \Big)^{A/q}| L_{p/A}(\R^{n}, w_{\alpha})\Big\|^{1/A} \\
\lesssim & \Big\| \Big( \sum_{j \in \N_{0}} \sum_{m \in \Z^{n}} (\abs{\lambda_{j,m}}^{A} 2^{jnA/p})^{q/A} \chi_{E_{j,m}}(\cdot)\Big)^{A/q}|L_{p/A}(\R^{n}, w_{\alpha}) \Big\|^{1/A}\\
= & \Big\| \Big( \sum_{j\in \N_0} \sum_{m \in \Z^{n}} \abs{\lambda_{j,m}}^{q} 2^{jnq/p} \chi_{E_{j,m}}(\cdot) \Big)^{1/q} |L_{p}(\R^{n}, w_{\alpha}) \Big\|.
\end{align*}
Now we study the case $q=\infty$. For $ 0 < A < \frac{p}{r_{0}(w_{\alpha})}$, 
\begin{align*}
\norm{\lambda|f_{p,\infty}(\R^{n},w_{\alpha})} &= \Big\|{\sup_{j,m} |\lambda_{j,m}| 2^{jn/p} \chi_{{j,m}}(\cdot)}| L_{p}(\R^{n}, w_{\alpha})\Big\| \\
&= \Big\|\sup_{j,m} |\lambda_{j,m}|^{A} 2^{jnA/p} \chi_{{j,m}}(\cdot)|L_{p/A}(\R^{n}, w_{\alpha}) \Big\|^{1/A} \\
&\lesssim \Big\|\sup_{j,m} |\lambda_{j,m}|^{A} 2^{jnA/p} M\chi_{E_{j,m}}(\cdot)|L_{p/A}(\R^{n}, w_{\alpha}) \Big\|^{1/A} \\
&\leq \Big\|M(\sup_{j,m} |\lambda_{j,m}|^{A} 2^{jnA/p} \chi_{E_{j,m}}) (\cdot)|L_{p/A}(\R^{n}, w_{\alpha}) \Big\|^{1/A} \\
& \lesssim \Big\|\sup_{j,m} |\lambda_{j,m}|^{A} 2^{jnA/p} \chi_{E_{j,m}}(\cdot)|L_{p/A}(\R^{n}, w_{\alpha})\Big\|^{1/A} \\
&=  \Big\|\sup_{j,m} |\lambda_{j,m}| 2^{jn/p} \chi_{E_{j,m}} (\cdot)|L_{p}(\R^{n}, w_{\alpha}) \Big\|,
\end{align*}
where we have used Lemma \ref{lem:maximal inequality 1}. 
\end{proof}
\subsection{Trace and extension operators on $F_{p,q}^{s}(\R^{n},w_{\alpha})$}
Let $n \in \N$ and $k=1,2,\dots,n-1$. If $x=(x_{1},\dots,x_{n}) \in \R^{n}$ and $\varphi \in \Sc(\R^{n})$, the trace operator $\trk$ is defined as
\[
\trk: \varphi(x) \longrightarrow \varphi(x_1,\dots,x_{n-k},0,0,\dots,0).
\]

Let $A$ and $B$ be two-quasi Banach spaces such that
\begin{align*}
\Sc(\R^{n}) \emb A \emb \Sc'(\R^{n}) \quad \text{and} \quad \Sc(\R^{n-k}) \emb B \emb \Sc'(\R^{n-k}).
\end{align*}
Suppose that there exists $c>0$ satisfying
\begin{equation}\label{eq:trace definition}
\norm{\trk \varphi|B} \leq c \norm{\varphi|A} \quad \text{for every } \varphi \in \Sc(\R^{n}).
\end{equation}
Although the punctual definition of the trace operator might not make sense in general on $A$, if $\Sc(\R^{n})$ is dense, the inequality \eqref{eq:trace definition} can be uniquely extended by completion to the whole space $A$. Due to the density of $\Sc(\R^{n})$ in $F_{p,q}^{s}(\R^{n},w_{\alpha})$ if $0 < p,q < \infty$ (see \cite{Bui82}), this is how we are going to understand the operator $\trk$ acting on spaces $F_{p,q}^{s}(\R^{n},w_{\alpha})$. \blue{In case of $q=\infty$ one may strengthen an embedding argument as explained in some detail in \cite[Rem.~1.170]{T06}\magenta{, but using now that if $ \eps > 0$, then $F_{p,\infty}^{s}(\R^{n}, w_\alpha) \hookrightarrow F_{p,p}^{s-\eps}(\R^{n}, w_{\alpha})$ (see \cite[Theorem 2.6/(i)]{Bui82}).}}


\begin{thm}\label{thm:trace 1}
Let $0 < p < \infty$, $ 0 < q \leq \infty$, $ \alpha > -1$ and $s-\frac{\alpha+1}{p}>(n-1)(\frac{1}{p}-1)_{+}$. Then 
\[
\tr F_{p,q}^{s}(\R^{n}, w_{\alpha}) = \tr B_{p}^{s-\frac{\alpha}{p}}(\R^{n}) = B_{p}^{s-\frac{\alpha+1}{p}}(\R^{n-1}).
\]
\end{thm}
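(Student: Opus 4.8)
The plan is to establish the chain of identities by reducing everything to the sequence-space level, using the atomic decomposition of $F^s_{p,q}(\R^n,w_\alpha)$ (Theorem~\ref{thm:atomic decomp}) on the domain side and the wavelet characterisation of the unweighted Besov space (Theorem~\ref{thm:wavelet Besov spaces}) on the target side. I would split the statement into two halves: the \emph{trace inclusion} $\tr F^s_{p,q}(\R^n,w_\alpha)\hookrightarrow B^{s-(\alpha+1)/p}_{p}(\R^{n-1})$ and the \emph{extension} $B^{s-(\alpha+1)/p}_{p}(\R^{n-1})\hookrightarrow \tr F^s_{p,q}(\R^n,w_\alpha)$, which together give the outer equality. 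The middle identity $\tr F^s_{p,q}(\R^n,w_\alpha)=\tr B^{s-\alpha/p}_{p}(\R^n)$ should come out for free as a by-product, since the classical (unweighted) trace theorem gives $\tr B^{s-\alpha/p}_p(\R^n)=B^{s-\alpha/p-1/p}_p(\R^{n-1})=B^{s-(\alpha+1)/p}_p(\R^{n-1})$ under the stated restriction on $s$; alternatively one may use the obvious inclusion $B^{s-\alpha/p}_p(\R^n)\hookrightarrow F^s_{p,q}(\R^n,w_\alpha)$ in one direction and compare trace spaces.

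\emph{Extension (the constructive part).} Given $g\in B^{s-(\alpha+1)/p}_p(\R^{n-1})$, expand it by Theorem~\ref{thm:wavelet Besov spaces} into wavelets $\psi_{m'},\psi^{j}_{\ell,m'}$ on $\R^{n-1}$ with coefficient sequence $\lambda\in b^{s-(\alpha+1)/p}_{p,p}(\R^{n-1})$. I would define $\ext g$ on $\R^n$ by tensoring each $(n-1)$-dimensional wavelet at scale $j$ with a fixed smooth bump $\eta(2^j x_n)$ (suitably normalised, with enough vanishing moments for $j\ge 1$ when needed), so that the building blocks $a_{j,(m',m_n)}(x)=c\,\psi^{j}_{\ell,m'}(x')\,\eta(2^j x_n)$ are (up to constants) $1_K$- or $(s,p)_{K,L}$-atoms located at $Q_{j,m}$ in $\R^n$, nonzero only for the single value $m_n$ with $2^{-j}m_n\approx 0$. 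By Theorem~\ref{thm:atomic decomp}, $\norm{\ext g\mid F^s_{p,q}(\R^n,w_\alpha)}\lesssim \norm{\mu\mid f_{p,q}(\R^n,w_\alpha)}$ where $\mu$ is the lifted coefficient sequence; by Proposition~\ref{prop:equivalent f norm} (taking $E_{j,m}$ inside the cubes touching the hyperplane) this $f_{p,q}(\R^n,w_\alpha)$-quasinorm reduces to a weighted $\ell_p$-sum over $j$ and $m'$ with weight coming from integrating $w_\alpha(x_n)=|x_n|^\alpha$ over a layer of thickness $2^{-j}$ around $x_n=0$: that integral is $\sim 2^{-j(\alpha+1)}$, which after the $L_p$-normalisation $2^{jn/p}$ produces exactly the exponent shift $2^{j(s-(\alpha+1)/p)}$ matching $b^{s-(\alpha+1)/p}_{p,p}(\R^{n-1})$. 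One checks $\tr\circ\ext=\id$ directly on the wavelet side (choosing $\eta(0)=1$, $\eta$ normalised appropriately). Here the sum over $q$ collapses because, for a fixed $j$, only one dyadic layer in the $x_n$-direction is hit, so the $\ell_q$-norm inside $L_p$ is trivial and only the outer $\ell_p$ over scales survives — this is why the target is a Besov and not a Triebel–Lizorkin space, and why $q$ disappears.

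\emph{Trace inclusion.} Conversely, take $f\in F^s_{p,q}(\R^n,w_\alpha)$ with an optimal atomic decomposition $f=\sum_{j,m}\lambda_{j,m}a_{j,m}$. Restricting to $x_n=0$, only atoms with $2CQ_{j,m}$ meeting the hyperplane contribute, and $\tr a_{j,m}$ is (a constant multiple of) an atom on $\R^{n-1}$ located at $Q_{j,m'}$. I would then estimate the $b^{s-(\alpha+1)/p}_{p,p}(\R^{n-1})$-quasinorm of the restricted coefficient sequence by the $f_{p,q}(\R^n,w_\alpha)$-quasinorm of $\lambda$, again via Proposition~\ref{prop:equivalent f norm}: summing $|\lambda_{j,m}|^p$ over the $x_n$-near-hyperplane layer and comparing with $\int_{|x_n|\lesssim 2^{-j}}|x_n|^\alpha\dint x_n\sim 2^{-j(\alpha+1)}$ yields the claim, after first replacing the $\ell_q$-sum over $j$ by an $\ell_p$-sum at the cost of a harmless factor since for each fixed near-hyperplane cube only one $j$-layer is active in $x_n$ (more carefully: one uses $q\le\infty$ and a discretisation argument to pass from the $F$-type to the $B$-type sequence norm). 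Taking the infimum over all atomic representations of $f$ gives $\norm{\tr f\mid B^{s-(\alpha+1)/p}_p(\R^{n-1})}\lesssim\norm{f\mid F^s_{p,q}(\R^n,w_\alpha)}$.

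\emph{Main obstacle.} The delicate point is the bookkeeping at the hyperplane: making precise that "only a bounded number of dyadic layers in the $x_n$-direction are relevant" so that the weighted $L_p$-integral of $w_\alpha$ over those layers is genuinely $\sim 2^{-j(\alpha+1)}$ uniformly in $j$ (including the transition region $|x_n|\approx 1$ where $w_\alpha$ switches definition, which only affects finitely many scales and is absorbed into constants), together with the passage between the $f_{p,q}$-type sequence norm (an $L_p(\ell_q)$ object) and the genuinely $\ell_p$-structured $b_{p,p}$-norm on the trace side. The restriction $s-(\alpha+1)/p>(n-1)(1/p-1)_+$ is exactly what makes the moment condition $L>n(1/\min(p/r_0(w_\alpha),q)-1)_+$ in Theorem~\ref{thm:atomic decomp} and the wavelet smoothness requirement $u>\max\{s,n(1/p-1)_+-s\}$ in Theorem~\ref{thm:wavelet Besov spaces} simultaneously satisfiable, and it guarantees the trace is well-defined; verifying this compatibility of parameters is the other point needing care.
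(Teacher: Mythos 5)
Your proposal is correct in outline but follows a genuinely different route from the paper: for Theorem~\ref{thm:trace 1} itself the paper gives no self-contained argument at all, deferring entirely to \cite[Proposition~2.4]{HM}, \cite[Theorem~4.4]{P} and \cite[Section~4.4.1]{T92}. The constructive wavelet--tensor--bump extension you describe is exactly what the paper carries out, but only later and for the more general Theorem~\ref{thm:trace 2}(ii) (with $\chi(2^{j}y)$ in place of your $\eta(2^{j}x_n)$, and with the same collapse of the $\ell_q$-sum via the pairwise disjoint sets $E_{j,m}=Q^{(n-1)}_{j,m}\times(2^{-j-2},2^{-j-1})$ of weighted measure $\sim 2^{-j(\alpha+n)}$). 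What your approach buys is a direct proof of the boundedness of $\tr$ by restricting an atomic decomposition, which the paper never writes out; what the paper's citations buy is the identification $\tr F^s_{p,q}(\R^n,w_\alpha)=\tr B^{s-\alpha/p}_{p}(\R^n)$, which is the least substantiated link in your sketch (the embedding $B^{s-\alpha/p}_p(\R^n)\hookrightarrow F^s_{p,q}(\R^n,w_\alpha)$ is not obvious; it is really a weighted--unweighted embedding with a smoothness shift $\alpha/p$, taken from \cite{HM,P}).

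One point in your trace-inclusion half needs repair. The restriction of an $(s,p)_{K,L}$-atom on $\R^n$ to $\{x_n=0\}$ satisfies the support and derivative estimates of an $(s-\frac{\alpha+1}{p},p)_{K,0}$-atom on $\R^{n-1}$ (after extracting a factor $2^{-j\alpha/p}$ into the coefficients, since $s-\frac{n}{p}=(s-\frac{\alpha+1}{p})-\frac{n-1}{p}+\frac{\alpha}{p}$), but it has \emph{no} vanishing moments: moment conditions are destroyed by restriction. The argument therefore only closes because the hypothesis $s-\frac{\alpha+1}{p}>(n-1)(\frac1p-1)_{+}$ guarantees that the target space $B^{s-(\alpha+1)/p}_{p,p}(\R^{n-1})$ admits an atomic decomposition with $L=0$, i.e.\ without any moment conditions. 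That is the actual role of this hypothesis (together with making the classical Besov trace on $\R^{n-1}$ available for the middle identity); it is not, as you suggest, a compatibility condition between the moment parameter $L$ of Theorem~\ref{thm:atomic decomp} and the wavelet smoothness $u$ of Theorem~\ref{thm:wavelet Besov spaces} --- those can always be met by taking $K,L,u$ large. You should also state explicitly that $\tr$ is defined by completion from $\Sc(\R^n)$ (as the paper arranges before Theorem~\ref{thm:trace 1}), so that $\sum_{j,m}\lambda_{j,m}\tr a_{j,m}$ does not depend on the chosen atomic representation; with that, and with the disjointness of the near-hyperplane sets $E_{j,m}$ giving the lower bound for the $f_{p,q}(\R^n,w_\alpha)$-quasinorm via Proposition~\ref{prop:equivalent f norm}, your estimate $\norm{\tr f|B^{s-(\alpha+1)/p}_{p}(\R^{n-1})}\lesssim\norm{f|F^s_{p,q}(\R^n,w_\alpha)}$ goes through.
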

\begin{proof}
This result corresponds to \cite[Proposition 2.4]{HM} with a correction in the values that $s$ can take, regarding \cite[Theorem 4.4]{P} with $d=n-1$ and $\Gamma = \R^{n-1}$ and \cite[Section 4.4.1/(i)]{T92}.
\end{proof}
Here $\tr F_{p,q}^{s}(\R^{n}, w_{\alpha}) =  B_{p}^{s-\frac{\alpha+1}{p}}(\R^{n-1})$ means that, \blue{firstly, the trace operator acts linear and bounded from  $F_{p,q}^{s}(\R^{n}, w_{\alpha})$ into $B_{p}^{s-\frac{\alpha+1}{p}}(\R^{n-1})$, and, secondly, that} for any $g \in B_{p}^{s-\frac{\alpha+1}{p}}(\R^{n-1})$ there exists an $f \in F_{p,q}^{s}(\R^{n},w_{\alpha})$ such that $\tr f=g$ and 
\[
\norm{g|B_{p}^{s-\frac{\alpha+1}{p}}(\R^{n-1})} \sim \inf \{ \norm{f|F_{p,q}^{s}(\R^{n},w_{\alpha})}: \tr f = g\}.
\]
\begin{thm}\label{thm:trace 2}
Let $0 < p < \infty$, $ 0 < q \leq \infty$, $k\in \{1, \dots, n-1\}$, $\alpha > -1$ and $ s - \frac{\alpha + k}{p} > (n-k)(\frac{1}{p}-1)_{+}$. Then 
\begin{enumerate}[\upshape\bfseries (i)]
\item $\trk: F_{p,q}^{s}(\R^{n}, w_{\alpha}) \hookrightarrow B_{p}^{s-\frac{\alpha + k}{p}}(\R^{n-k})$ is \blue{linear and} bounded, and 
\item there exists a linear and bounded extension operator
\[
\ext: B_{p}^{s-\frac{\alpha + k}{p}}(\R^{n-k}) \rightarrow F_{p,q}^{s}(\R^{n}, w_{\alpha})
\]
such that $\ \trk \circ \ext = \id: B_{p}^{s-\frac{\alpha + k}{p}}(\R^{n-k}) \rightarrow B_{p}^{s-\frac{\alpha + k}{p}}(\R^{n-k})$. 
\end{enumerate}
In particular, 
\[\trk F_{p,q}^{s} (\R^{n}, w_{\alpha}) = B_{p}^{s-\frac{\alpha + k}{p}}(\R^{n-k}).\]
\end{thm}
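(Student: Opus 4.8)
The plan is to reduce the $k$-codimensional trace to an iteration of the codimension-one case already recorded in Theorem~\ref{thm:trace 1}. First I would observe that $w_\alpha$, viewed as a weight on $\R^n = \R^{n-1}\times\R$ in the last variable, does not depend on the $(n-1)$-st coordinate, so the trace on the hyperplane $x_{n-1}=0$ behaves exactly as in the unweighted theory: by Theorem~\ref{thm:trace 1} applied with $n-1$ in place of $n$ and $\alpha=0$ (i.e. to the classical Triebel--Lizorkin space in the frozen variable), one gets $\text{\normalfont{tr}}_{\R^{n-1}} F^s_{p,q}(\R^n,w_\alpha) = B^{s-\alpha/p}_{p}(\R^{n-1})$, where the space on the right still carries the $|x_{n}|^\alpha$-weight. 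The key point is that the operators in Theorem~\ref{thm:trace 1} commute with freezing coordinates other than $x_n$: the extension operator $\ext$ constructed there (via atomic/wavelet building blocks that are tensor products) tensors with the identity in the spectator variables, so iterating the codimension-one trace $k-1$ times through the coordinates $x_{n-1},x_{n-2},\dots,x_{n-k+1}$ produces $\text{\normalfont{tr}}_{\R^{n-k+1}} F^s_{p,q}(\R^n,w_\alpha) = B^{s-\alpha/p}_{p}(\R^{n-k+1},w_\alpha)$ (Besov space still weighted in the remaining $x_n$ variable, with the Besov-space trace theory in the unweighted directions being classical, cf.\ \cite[Sect.~4.4.1]{T92}). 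A final application of the genuinely weighted codimension-one trace — now Theorem~\ref{thm:trace 1} in its full force with the weight $w_\alpha$ — removes the last coordinate $x_n$ and yields $B^{s-\alpha/p-1/p}_p(\R^{n-k}) = B^{s-(\alpha+k)/p}_p(\R^{n-k})$. Composing the corresponding extension operators in reverse order gives a linear bounded $\ext$ with $\text{\normalfont{tr}}_{\R^{n-k}}\circ\ext=\id$.

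An alternative, more self-contained route — which I would in fact prefer, since it keeps the bookkeeping transparent and directly delivers the norm equivalence — is to argue entirely on the sequence-space side, mirroring the strategy announced in the introduction. Using Theorem~\ref{thm:atomic decomp} one represents $f\in F^s_{p,q}(\R^n,w_\alpha)$ by atoms $a_{j,m}$ adapted to dyadic cubes $Q_{j,m}\subset\R^n$, with coefficient sequence $\lambda\in f_{p,q}(\R^n,w_\alpha)$; on the other side one uses the wavelet isomorphism of Theorem~\ref{thm:wavelet Besov spaces} for $B^{s-(\alpha+k)/p}_p(\R^{n-k})\cong b^{s-(\alpha+k)/p}_{p,p}(\R^{n-k})$. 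One may take the atoms in product form $a_{j,m}(x)=a_{j,m'}^{(n-k)}(x',\dots)\cdot\psi$-type factors in $x''=(x_{n-k+1},\dots,x_n)$, so that restricting to $x''=0$ picks out precisely those $m$ with $m''$ in a fixed finite range and turns the $(n-k)$-dimensional factor into a wavelet-type building block on $\R^{n-k}$. The trace estimate $\|\text{\normalfont{tr}}_{\R^{n-k}} f\mid B^{s-(\alpha+k)/p}_p\|\lesssim\|f\mid F^s_{p,q}\|$ then becomes the sequence-space inequality
\[
\Big(\sum_{j,m'}2^{j(s-\frac{\alpha+k}{p})p-jn'} |\mu_{j,m'}|^p\Big)^{1/p}\lesssim \|\lambda\mid f_{p,q}(\R^n,w_\alpha)\|,\qquad \mu_{j,m'}=\!\!\sum_{m''\ \text{bounded}}\!\!\lambda_{j,(m',m'')},
\]
where $n'=n-k$, and the weight contributes the extra factor: $w_\alpha(Q_{j,m})\sim 2^{-jn}2^{-j\alpha}$ for the cubes meeting $\{|x_n|\le 1\}$ near the hyperplane. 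Conversely, given $\mu\in b^{s-(\alpha+k)/p}_{p,p}(\R^{n-k})$ one builds the extension by lifting each wavelet on $\R^{n-k}$ to a product atom on $\R^n$ supported in a single column of cubes $\{|x_n|\lesssim 2^{-j}\}$, so only $O(1)$ values of $m''$ per $(j,m')$ are used; one checks $\ext\mu\in f_{p,q}(\R^n,w_\alpha)$ by the converse sequence-space estimate, again using Proposition~\ref{prop:equivalent f norm} to replace $\chi_{j,m}$ by $\chi_{E_{j,m}}$ supported near the hyperplane, and the trace-extension identity is built in by construction.

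The main obstacle is the weighted power-counting in the $x_n$-direction and the interplay with the summability exponent $q$. In the unweighted codimension-one trace the key mechanism is that summing $2^{-jn}$ over the $O(2^{j(k-1)})$ cubes in a column meeting the hyperplane, or over the single closest cube, produces the shift $s\mapsto s-k/p$; here each such cube additionally carries the weight factor $w_\alpha(Q_{j,m})\sim 2^{-j(n+\alpha)}$ when $|x_n|\le 1$, which upgrades the shift to $s-(\alpha+k)/p$. Making this rigorous requires (a) checking that $\alpha>-1$ guarantees the relevant geometric series in $2^{-j(\alpha+1)}$ and the embeddings used in Proposition~\ref{prop:equivalent f norm} converge — this is exactly where the constant $r_0(w_\alpha)=\max\{\alpha+1,1\}$ from Example~\ref{Ex-walpha} enters, via the admissibility condition $0<A<p/r_0(w_\alpha)$ for the vector-valued maximal inequality Lemma~\ref{lem:maximal inequality 2} — and (b) handling the mismatch between the $\ell_q$-structure of $F_{p,q}$ and the $\ell_p$-structure of the target Besov space $B^{\,\cdot}_{p,p}$: for the trace inequality one uses the lower bound $q\ge$ nothing but needs the condition $s-(\alpha+k)/p>(n-k)(1/p-1)_+$ to guarantee $L$-moment conditions on atoms suffice, while for the extension one is free to choose $q$ and so the column-supported building blocks make the $\ell_q$-sum collapse to $O(1)$ terms, matching $\ell_p$. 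The borderline growth $2^{j(k-1)}$ versus the decay $2^{-j(\alpha+1)}\cdot 2^{-j(k-1)}$ in the trace direction, and the verification that no logarithmic loss occurs at the endpoints, is the delicate computational heart; everything else is a bookkeeping of tensor-product atoms and an appeal to Theorems~\ref{thm:wavelet Besov spaces} and~\ref{thm:atomic decomp}.
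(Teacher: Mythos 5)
Your second, sequence-space route for the extension operator is essentially the paper's proof of part (ii): take the wavelet expansion of $f\in B_{p}^{s-\frac{\alpha+k}{p}}(\R^{n-k})$ from Theorem~\ref{thm:wavelet Besov spaces}, tensor each wavelet with a bump in the orthogonal variables scaled like $2^{-j}$ (so all building blocks sit in the single column of cubes $Q_{j,(m,0)}$), check that these are admissible atoms for Theorem~\ref{thm:atomic decomp}, and compute the $f_{p,q}(\R^n,w_\alpha)$-norm via Proposition~\ref{prop:equivalent f norm} with the disjoint sets $E_{j,m}=Q^{(n-k)}_{j,m}\times(2^{-j-2},2^{-j-1})^k$, using $\int_{E_{j,m}}w_\alpha\sim 2^{-j(\alpha+n)}$; the identity $\trk\circ\ext=\id$ is then built in because the cutoff equals $1$ at the origin. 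Your description of this mechanism, including the weight count $2^{-j(n+\alpha)}$ and the role of $r_0(w_\alpha)$ through the maximal inequality, matches the paper.

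The genuine gap is in part (i). The paper obtains the trace bound by factorizing $\trk=\mathrm{tr}_{\R^{(n-(k-1))-1}}\circ\cdots\circ\mathrm{tr}_{\R^{n-1}}$ with the \emph{weighted} direction $x_n$ traced \emph{first}: Theorem~\ref{thm:trace 1} lands you in the unweighted space $B_p^{s-\frac{\alpha+1}{p}}(\R^{n-1})$, and the remaining $k-1$ traces are classical unweighted Besov traces from \cite[Section 4.4.1]{T92}. You iterate in the opposite order, tracing the spectator variables $x_{n-1},\dots,x_{n-k+1}$ first while the weight is still present. Those intermediate steps are not covered by anything you cite: Theorem~\ref{thm:trace 1} ``with $\alpha=0$'' applies to unweighted $F$-spaces, not to $F^s_{p,q}(\R^n,w_\alpha)$ with the weight in a direction transverse to the trace hyperplane, and the outputs would be weighted Besov spaces $B^{\,\cdot}_{p}(\,\cdot\,,w_\alpha)$ whose further traces (including the final one in the weighted direction, now for a $B$- rather than an $F$-space) are likewise not provided by Theorem~\ref{thm:trace 1} or \cite{T92}. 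Your bookkeeping also does not close: tracing $k-1$ unweighted directions should drop the smoothness by $(k-1)/p$ (you write $\alpha/p$), and your final identity $s-\frac{\alpha}{p}-\frac{1}{p}=s-\frac{\alpha+k}{p}$ fails for $k\geq 2$.

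Your alternative sequence-space argument for (i) does not repair this. Theorem~\ref{thm:atomic decomp} gives an optimal atomic decomposition with \emph{general} atoms; you are not free to assume they are in product form, so the reduction of the trace estimate to the displayed inequality for $\mu_{j,m'}=\sum_{m''}\lambda_{j,(m',m'')}$ needs an argument (restriction of general $C^K$-atoms to the hyperplane, loss of moment conditions, and the passage from the weighted $L_p(\ell_q)$ quasi-norm to the $\ell_p$ Besov sequence norm), and it must be uniform in $q$, the case $q=\infty$ being the weakest hypothesis. You flag this as the ``delicate computational heart'' but do not carry it out, whereas the paper sidesteps it entirely by citing Theorem~\ref{thm:trace 1} (i.e.\ \cite{HM,P,T92}) and ordering the iterated traces so that the weight is removed in the first step.
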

\begin{proof}
As $\trk = \text{tr}_{\R^{(n-(k-1))-1}} \circ \dots \circ \text{tr}_{\R^{(n-1)-1}} \circ \text{tr}_{\R^{n-1}}$, applying Theorem \ref{thm:trace 1} and \cite[Section 4.4.1]{T92} we have (i). 

For proving (ii), we are going to build the extension operator using the wavelet representation of Besov spaces given in Theorem \ref{thm:wavelet Besov spaces} and the atomic decomposition of weighted Triebel-Lizorkin spaces in Theorem \ref{thm:atomic decomp}. We follow some of the ideas in \cite[Section 2.2.2]{T20}, based on \cite[Section~5.1.3]{T08}. 

Take $f \in B_{p}^{s-\frac{\alpha + k}{p}}(\R^{n-k})$ and consider its wavelet representation of order 
\[ u > \max \Big\lbrace s, (n-k)\Big(\frac{1}{p}-1\Big)_{+}-s+\frac{\alpha+k}{p}, n\Big(\frac{1}{\min(p/r_{0}(w_\alpha),q)}-1\Big)_{+}-s\Big\rbrace, \] 
\begin{align*}
f(x) = \sum_{m \in \Z^{n-k}} \lambda_{m} \psi_{m}(x) + \sum_{\ell=2}^{2^{n-k}} \sum_{j\in \N_{0}} \sum_{m \in \Z^{n-k}} \lambda_{m}^{j,\ell} 2^{-j(n-k)/2} \psi_{\ell,m}^{j}(x), \quad \text{for every } x \in \R^{n-k},
\end{align*}
where 
\begin{align*}
&\psi_{m}(x) = \prod_{r=1}^{n-k} \psi_{F}(x_{r}-m_{r}) &\text{ and }&  &\psi_{\ell,m}^{j}(x) = 2^{j(n-k)/2} \prod_{r=1}^{n-k} \psi_{G_{\ell}^{r}} (2^{j} x_{r}-m_{r}), \\
&\lambda_{m} = \lambda_{m}(f) = (f, \psi_{m}) &\text{ and }&  &\lambda_{m}^{j,\ell} = \lambda_{m}^{j,\ell}(f) = 2^{j(n-k)/2}(f,\psi_{\ell,m}^{j}),
\end{align*}
\blue{and
\begin{equation}\label{ddh-1}
\left\| f | B_{p}^{s-\frac{\alpha + k}{p}}(\R^{n-k})\right\| \sim \left\|\lambda| b^{s-\frac{\alpha + k}{p}}_{p,q}(\R^{n-k})\right\|
\end{equation}
in view of Theorem~\ref{thm:wavelet Besov spaces}.}

Take $\chi \in C_{0}^{\infty}(\R^{k})$ with $\supp \chi \subset (-1,1)^{k}$ and $\chi(y) = 1$ if $y \in [-1/2, 1/2]^{k}$. For every $x \in \R^{n-k}$ and $y \in \R^{k}$ we define
\begin{align*}
\ext f (x,y) &= \sum_{m \in \Z^{n-k}} \lambda_{m} \psi_{m}(x) \chi(y) + \sum_{\ell=2}^{2^{n-k}} \sum_{j\in \N_0} \sum_{m \in \Z^{n-k}} \lambda_{m}^{j, \ell} 2^{-j(n-k)/2} \psi_{\ell,m}^{j}(x) \chi(2^{j}y).
\end{align*}
We can rewrite it as $\ext f(x,y) = \sum_{\ell=1}^{2^{n-k}} g_{\ell}(x,y)$ with 
\[g_{\ell}(x,y) = \sum_{j\in \N_0} \ \sum_{(m,M) \in \Z^{n-k}\times\Z^k} \tilde{\lambda}_{j,(m,M)}^{\ell} a_{j,(m,M)}^{\ell}(x,y), \quad \blue{\ell=1, \dots, 2^{n-k}},\]
and being
\begin{align*}
a_{j,(m,M)}^{\ell}(x,y) &= \begin{cases}
\psi_{m}(x) \chi(y) &\text{ if } \ell =1, j = 0 \text{ and } M =0, \\
2^{-j(s-n/p)} 2^{-j(n-k)/2} \psi_{\ell,m}^{j}(x) \chi(2^{j}y) &\text{ if } \ell = 2,\dots,2^{n-k} \text{ and } M =0, \\
0 &\text{ otherwise,}
\end{cases}\\
\intertext{and}\\
\tilde{\lambda}_{j,(m,M)}^{\ell} &= \begin{cases} \lambda_{m} &\text{ if } \ell=1, j=0 \text{ and } M=0, \\
2^{j(s-n/p)}\lambda_{m}^{j,\ell} &\text{ if } \ell=2,\dots,2^{n-k} \text{ and } M=0, \\
0 &\text{ otherwise.} \end{cases}
\end{align*}
Now we prove that these are atomic decompositions of $g_{\ell}$ in $F_{p,q}^{s}(\R^{n}, w_{\alpha})$ according to Theorem \ref{thm:atomic decomp}. 
\begin{enumerate}
\item Let $\ell=1$. Then 
\begin{enumerate}[(i)]
\item $\supp a_{0,(m,0)}^{1} \subset \supp \psi_{m} \times (-1,1)^{k} \subset b Q_{0,(m,0)}$ \quad using Remark \ref{rmk:wavelet support}, and  
\item $\abs{\Dd^{\beta}a_{0,(m,0)}^{1}(x,y)} =\abs{\prod_{r=1}^{n-k} \Dd^{\beta_r} \psi_{F}(x_{r}-m_{r}) \Dd^{(\beta_{n-k+1},\dots,\beta_{n})} \chi(y)} \leq C$ \quad for all $(x,y) \in \R^{n}$ and $\abs{\beta} \leq u$. 
\end{enumerate}
\item Let $\ell=2,\dots,2^{n-k}$. Then 
\begin{enumerate}[(i)]
\item $\supp a_{j,(m,0)}^{\ell} \subset \supp \psi_{\ell,m}^{j} \times (-2^{-n}, 2^{-n})^{k} \subset b Q_{j,(m,0)}$ \quad using Remark \ref{rmk:wavelet support}, and 
\item 
$\displaystyle\begin{aligned}[t]
\abs{\Dd^{\beta} a_{j,(m,0)}^{\ell}(x,y)} &= 2^{-j(s-\frac{n}{p})+j\abs{\beta}} \Big|\prod_{r=1}^{n-k} \Dd^{\beta_{r}} \psi_{G_{\ell}^{r}}(2^{j}x_{r}-m_{r})\Dd^{(\beta_{n-k+1},\dots,\beta_{n})} \chi(2^{j}y)\Big|\\
&\leq C 2^{-j(s-\frac{n}{p})+j \abs{\beta}}, \quad \text{for all } (x,y) \in \R^{n} \text{ and } \abs{\beta} \leq u,
\end{aligned}$.
\item $\int_{\R^{n}} z^{\gamma} a_{j,(m,0)}^{\ell}(z) \dint z =0$, for every $ \abs{\gamma} < u$, \quad in view of the vanishing moments for the wavelets.
\end{enumerate}
\end{enumerate}
Thus we get that
\begin{align}\label{eq:estimate g1}
\norm{g_{1}|F_{p,q}^{s}(\R^{n}, w_{\alpha})} &\lesssim \|\tilde{\lambda}^{1}| f_{p,q}(\R^{n},w_{\alpha})\|= \Big\|{\Big( \sum_{m \in \Z^{n-k}} \abs{\lambda_{m}}^{q} \chi_{Q_{0,(m,0)}}(\cdot)\Big)^{1/q}}|L_{p}(\R^{n}, w_{\alpha})\Big\| \nonumber\\
&\sim \Big(\sum_{m \in \Z^{n-k}} \abs{\lambda_{m}}^{p} \int_{Q_{0,(m,0)}} w_{\alpha}(z) \dint z \Big)^{1/p} \sim \Big(\sum_{m \in \Z^{n-k}} \abs{\lambda_{m}}^{p} \Big)^{1/p}, 
\end{align}
since $\alpha>-1$.

Let $E_{j,m} = Q_{j,m}^{(n-k)} \times (2^{-j-2}, 2^{-j-1})^{k} \subset Q_{j,(m,0)}^{(n)}$. For $\ell=2,\dots, 2^{n-k}$, using Proposition \ref{prop:equivalent f norm}, the fact that $E_{j,m}$ are disjoint and $\int_{E_{j,m}} w_{\alpha}(z) \dint z \sim 2^{-j(\alpha+n)}$, we have
\begin{align}\label{eq:estimate gl}
\norm{g_{\ell}|F_{p,q}^{s}(\R^{n}, w_{\alpha})} &\lesssim \|{\tilde{\lambda}^{\ell}}|f_{p,q}(\R^{n},w_{\alpha})\| \nonumber\\
&= \Big\|{\Big(\sum_{m \in \Z^{n-k}} \sum_{j \in \N_{0}} |{\lambda_{m}^{j, \ell}}|^{q} 2^{j(s-n/p)q} 2^{jnq/p} \chi_{Q_{j,(m,0)}} \Big)^{1/q}}|L_{p}(\R^{n}, w_{\alpha})\Big\| \nonumber \\
&\sim \Big\|{\Big(\sum_{m \in \Z^{n-k}} \sum_{j \in \N_{0}} 2^{jsq} |{\lambda_{m}^{j,\ell}}|^{q} \chi_{E_{j,m}} \Big)^{1/q}}| {L_{p}(\R^{n}, w_{\alpha})}\Big\| \nonumber\\
&\sim \Big(\sum_{j \in \N_{0}} \sum_{m \in \Z^{n-k}} 2^{jsp} |{\lambda_{m}^{j, \ell}}|^{p} 2^{-j(n+\alpha)}\Big)^{1/p} \nonumber\\
&= \Big(\sum_{j \in \N_0} \sum_{m \in \Z^{n-k}} 2^{jp(s-\frac{n-k}{p}-\frac{\alpha+k}{p})} |{\lambda_{m}^{j, \ell}}|^{p} \Big)^{1/p}. 
\end{align}
And from here, we can prove
\begin{align*}
\norm{\ext f|F_{p,q}^{s}(\R^{n}, w_{\alpha})} & \lesssim \Big(\sum_{m \in \Z^{n-k}} |{\lambda_{m}}|^{p} \Big)^{1/p} + \sum_{\ell=2}^{2^{n-k}} \Big(\sum_{j \in \N_0} \sum_{m \in \Z^{n-k}} 2^{jp(s-\frac{n-k}{p}-\frac{\alpha+k}{p})} |{\lambda_{m}^{j,\ell}}|^{p} \Big)^{1/p} \nonumber \\
& \sim \norm{f|B_{p}^{s-\frac{\alpha+k}{p}}(\R^{n-k})} 
\end{align*}
\blue{in view of \eqref{ddh-1}.} Thus
\[
\ext: B_{p}^{s-\frac{\alpha+k}{p}}(\R^{n-k}) \hookrightarrow F_{p,q}^{s}(\R^{n}, w_{\alpha}) \quad \text{ is bounded,}
\]
and $\trk \circ \ext=\id: B_{p}^{s-\frac{\alpha+k}{p}}(\R^{n-k}) \rightarrow B_{p}^{s-\frac{\alpha+k}{p}}(\R^{n-k})$.
\end{proof}


For the particular case of weighted Sobolev spaces the previous result reads as follows. 
\begin{cor}\label{cor: trace of weighted sobolev spaces}
Let $1 < p < \infty$, $m \in \N$, $k\in \{1, \dots, n-1\}$ and $-1 < \alpha < p-1$ such that $ m >\frac{\alpha + k}{p}$. Then 
	\begin{enumerate}[\upshape\bfseries (i)]
		\item $\trk: W_{p}^{m}(\R^{n}, w_{\alpha}) \hookrightarrow B_{p}^{m-\frac{\alpha + k}{p}}(\R^{n-k})$ is \blue{linear and} bounded, and 
		\item there exists a linear and bounded extension operator
		\[
		\ext: B_{p}^{m-\frac{\alpha + k}{p}}(\R^{n-k}) \rightarrow W_{p}^{m}(\R^{n}, w_{\alpha})
		\]
		such that $\ \trk \circ \ext = \id: B_{p}^{m-\frac{\alpha + k}{p}}(\R^{n-k}) \rightarrow B_{p}^{m-\frac{\alpha + k}{p}}(\R^{n-k})$. 
	\end{enumerate}
	In particular, 
	\[\trk W_{p}^{m} (\R^{n}, w_{\alpha}) = B_{p}^{m-\frac{\alpha + k}{p}}(\R^{n-k}).\]
\end{cor}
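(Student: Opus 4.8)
The plan is to deduce Corollary~\ref{cor: trace of weighted sobolev spaces} directly from Theorem~\ref{thm:trace 2} by identifying the weighted Sobolev space $W_p^m(\R^n,w_\alpha)$ with a suitable weighted Triebel--Lizorkin space. Concretely, the key identification is
\[
W_p^m(\R^n,w_\alpha) = F_{p,2}^m(\R^n,w_\alpha), \qquad 1<p<\infty,\ m\in\N_0,\ w_\alpha\in\mathcal{A}_p,
\]
which is recorded in Remark~\ref{rem-Triebel} (following Bui \cite{Bui82}). For this we must check that the hypotheses of the corollary put us in the admissible range: the condition $-1<\alpha<p-1$ is exactly the requirement $w_\alpha\in\mathcal{A}_p$ from Example~\ref{Ex-walpha}, so the identification applies, and since $w_\alpha\in\mathcal{A}_p$ we have $r_0(w_\alpha)\le p$, hence $p/r_0(w_\alpha)\ge 1$ and the atomic machinery underlying Theorem~\ref{thm:trace 2} is available.

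Next I would verify that the restriction on $m$ matches the restriction on $s$ in Theorem~\ref{thm:trace 2}. There, with $q=2$ and $s=m$, one needs $m-\frac{\alpha+k}{p}>(n-k)\big(\frac1p-1\big)_+$. Because $1<p<\infty$ we have $\big(\frac1p-1\big)_+=0$, so this reduces precisely to $m>\frac{\alpha+k}{p}$, which is the hypothesis of the corollary. Thus Theorem~\ref{thm:trace 2} applies verbatim with $s=m$, $q=2$, yielding: (i) $\trk\colon F_{p,2}^m(\R^n,w_\alpha)\hookrightarrow B_p^{m-\frac{\alpha+k}{p}}(\R^{n-k})$ is linear and bounded; (ii) there is a linear bounded extension operator $\ext\colon B_p^{m-\frac{\alpha+k}{p}}(\R^{n-k})\to F_{p,2}^m(\R^n,w_\alpha)$ with $\trk\circ\ext=\id$; and in particular $\trk F_{p,2}^m(\R^n,w_\alpha)=B_p^{m-\frac{\alpha+k}{p}}(\R^{n-k})$.

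Finally I would transport these statements through the isomorphism $W_p^m(\R^n,w_\alpha)=F_{p,2}^m(\R^n,w_\alpha)$ (with equivalent norms), which immediately gives assertions (i) and (ii) of the corollary and the concluding trace identity. There is essentially no obstacle here beyond bookkeeping; the only point requiring a moment's care is confirming that the two norm-domain conditions coincide once $\big(\frac1p-1\big)_+$ collapses to zero in the reflexive range $1<p<\infty$, and that the Muckenhoupt restriction $-1<\alpha<p-1$ is precisely what makes the Sobolev--Triebel--Lizorkin identification valid. A short sentence could also note that for $p>1$ and $q=2$ the density of $\mathcal{S}(\R^n)$ — needed to make $\trk$ well defined as a bounded operator — is guaranteed, so no $q=\infty$ subtlety arises in this corollary.
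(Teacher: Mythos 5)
Your proposal is correct and is exactly the argument the paper intends: the corollary follows from Theorem~\ref{thm:trace 2} with $s=m$, $q=2$ via the Littlewood--Paley identification $W^m_p(\R^n,w_\alpha)=F^m_{p,2}(\R^n,w_\alpha)$ valid for $1<p<\infty$ and $w_\alpha\in\mathcal{A}_p$ (i.e.\ $-1<\alpha<p-1$), with the smoothness condition collapsing to $m>\frac{\alpha+k}{p}$ since $\bigl(\frac1p-1\bigr)_+=0$. The remark following the corollary in the paper confirms that this identification is precisely the reason for the restriction $\alpha<p-1$.
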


\begin{rmk}
  Here we recover the result which has been  first obtained in \cite[Theorem~2.9.2]{T78} for $k=1$ and all spaces $W^m_p(\R^n, w_\alpha)$ with $1<p<\infty$ and $-1<\alpha<mp-1$, using quite different techniques.   
  It can be found as well in \cite[Proposition~4.10]{iwona-phd} in the context of traces on fractals. However, if $1<p<\infty$ and $p-1\leq \alpha<mp-1$, then it is not clear whether one still has the Littlewood-Paley assertion $W^m_p(\R^n, w_\alpha)=F^m_{p,2}(\R^n, w_\alpha)$, mentioned in Remark~\ref{rem-Triebel}.
  \end{rmk}

\section{Interpolation of Besov spaces}\label{interpol}
By a quasi-Banach couple $(A_1,A_2)$ we mean two quasi-Banach spaces $A_1$, $A_2$ which are continuously embedded in the same Hausdorff topological vector space $\mathcal{A}$. 

For a quasi-Banach couple $(A_1,A_2)$, $ 0 < \theta < 1$ and $ 0 < r \leq \infty$ the \emph{real interpolation space} $(A_1,A_2)_{\theta,r}$ consists of all $a \in A_1+A_2$ having finite quasi-norm
\[
\norm{a|(A_1,A_2)_{\theta,r}}= \Big( \int_{0}^{\infty} [t^{-\theta} K(t,a)]^{r} \frac{\dint t}{t} \Big)^{1/r},
\]
changing the integral by the supremum if $r=\infty$. Here, $K(t,a)$ is the \emph{Peetre's $K$-functional} defined by
\[
K(t,a) = K(t,a; A_1,A_2) = \inf\{ \norm{a_1|A_1}+t\norm{a_2|A_2}: a = a_1+a_2, \ a_j \in A_j\}, \quad t > 0, \ a \in A_1+A_2.
\]
If $t=1$, the $K$-functional $K(1,\cdot; A_1,A_2)$ coincides with the usual quasi-norm on $A_1+A_2$. The spaces $(A_1,A_2)_{\theta,q}$ are quasi-Banach spaces and they satisfy the \emph{interpolation property}, that is to say, if $(B_1,B_2)$ is another quasi-Banach couple and $T$ is a linear operator bounded from $A_j$ into $B_j$ for $j=1,2$, then $T$ is also bounded from $(A_1,A_2)_{\theta,r}$ into $(B_1,B_2)_{\theta,r}$. \magenta{See, for example, \cite{T78,BL} for more details about interpolation theory}.


Let $A$ be a quasi-Banach space, $(\Omega,\mu)$ a measure space, $0 < p_1 < p_2 < \infty$, $ 0 < r \leq \infty$ and $ 0 < \theta < 1$, then 
\begin{equation}\label{eq:interpolation formula Lorentz}
(L_{p_1}(A; \Omega,\mu), L_{p_2}(A; \Omega,\mu))_{\theta,r} = L_{p,r}(A; \Omega,\mu) \quad \text{with } \frac{1}{p} = \frac{1-\theta}{p_1}+\frac{\theta}{p_2},
\end{equation}
(see \cite[Section~1.18.6, Theorem~2 and Remark~5]{T78}). This formula is the key for proving that under the previous hypothesis
\begin{equation}\label{eq:interpolation formula Triebel-Lizorkin}
(F_{p_1,q}^{s}(\R^{n}), F_{p_2,q}^{\magenta{s}}(\R^{n}))_{\theta,r} = F_{q}^{s}L_{p,r}(\R^{n}), \quad s \in \R, \ 0 < q \leq \infty,
\end{equation}
where $F_{q}^{s}L_{p,r}(\R^{n})$ is the set of all $f\in \Sc'(\R^{n})$ with finite quasi-norm
\[
\norm{f|F_{q}^{s}L_{p,r}(\R^{n})} = \Big\|\Big(\sum_{j\in\N_0} 2^{jsq} \abs{(\varphi_{j}\hat{f})^{\vee}(\cdot)}^{q} \Big)^{1/q}|L_{p,r}(\R^{n}) \Big\|.
\]

Formula \eqref{eq:interpolation formula Triebel-Lizorkin} was proven by Triebel in \cite[Theorem 2.4.2/(c)]{T78} in the Banach case using the retract and coretract method and for the quasi-Banach case a new approach was given by Yang, Cheng and Peng \cite[Theorem 6]{YCP} based on the wavelet characterization of spaces $F_{q}^{s}L_{p,r}(\R^{n})$. In \cite[Chapter 4]{T92} and \cite[Section 2.2.1]{T20}, Triebel describes four key-problems for Triebel-Lizorkin and Besov spaces $A_{p,q}^{s}(\R^{n})$, $A \in \{B,F\}$: traces on hyperplanes, invariance with respect to diffeomorphisms of $\R^{n}$ onto itself, the existence of linear extension operators of the corresponding spaces $A_{p,q}^{s}(\R^{n}_{+})$ on $\R^{n}_{+}$ to $A_{p,q}^{s}(\R^{n})$ and several types of pointwise multipliers. \magenta{For $F_{q}^{s}L_{p,r}(\R^{n})$, t}hree of these key-problems can be treated satisfactorily  using the interpolation formula \eqref{eq:interpolation formula Triebel-Lizorkin} (see \cite{BCT}). However, this is not the case for the problem of traces as we are going to see now.

Let $ 0 < p < \infty$, $ 0 < q,r \leq \infty$, $s -\frac{1}{p} > (n-1)\Big(\frac{1}{p}-1\Big)_{+}$. Then, for $0 < p_1 < p < p_2 < \infty$ and $ 0 < \theta < 1$ such that $\frac{1}{p} = \frac{1-\theta}{p_1}+\frac{\theta}{p_2}$ and $s-\frac{1}{p_j} > (n-1)\Big(\frac{1}{p_j}-1 \Big)_{+}$, $j=1,2$, applying Theorem \ref{thm:trace 2} with $k=1$ and $\alpha =0$ we get that
\begin{align*}
\tr :F_{p_j,q}^{s}(\R^{n}) &\longrightarrow B_{p_j}^{s-\frac{1}{p_j}}(\R^{n-1}),
\end{align*}
for $j=1,2$ and using the interpolation property and \eqref{eq:interpolation formula Triebel-Lizorkin} we have
\[
\tr : F_{q}^{s}L_{p,r}(\R^{n}) \longrightarrow (B_{p_1}^{s-\frac{1}{p_1}}(\R^{n-1}), B_{p_2}^{s-\frac{1}{p_2}}(\R^{n-1}))_{\theta,r}.
\]
Analogously, $\ext:(B_{p_1}^{s-\frac{1}{p_1}}(\R^{n-1}), B_{p_2}^{s-\frac{1}{p_2}}(\R^{n-1}))_{\theta,r} \longrightarrow F_{q}^{s}L_{p,r}(\R^{n})$ and
\[\tr \circ \ext = \id: (B_{p_1}^{s-\frac{1}{p_1}}(\R^{n-1}), B_{p_2}^{s-\frac{1}{p_2}}(\R^{n-1}))_{\theta,r} \longrightarrow (B_{p_1}^{s-\frac{1}{p_1}}(\R^{n-1}), B_{p_2}^{s-\frac{1}{p_2}}(\R^{n-1}))_{\theta,r}.\] 
If $r = p$, then 
\[
(B_{p_1}^{s-\frac{1}{p_1}}(\R^{n-1}), B_{p_2}^{s-\frac{1}{p_2}}(\R^{n-1}))_{\theta,p} = B_{p}^{s-\frac{1}{p}}(\R^{n-1}),
\]
see \cite[Theorem 2.4.3]{T83}. But for $r \neq p$  the characterization of this interpolation space is an open problem already stated by Peetre in his monograph on Besov spaces \cite[p.110]{Peetre}. However, some of the ideas of the construction of the extension operator in Theorem \ref{thm:trace 2} will allow us to get a description of these spaces and, more generally, we are going to get a description of the interpolation space
\[
(B_{p_1}^{s_1-\frac{\alpha}{p_1}}(\R^{n}), B_{p_2}^{s-\frac{\alpha}{p_2}}(\R^{n}))_{\theta,r},
\]
for $0 < p_1 < p_2 < \infty$, $\alpha \in \R$, $ 0 < \theta < 1$ and $ 0 < r \leq \infty$. As a consequence, this gives a characterization of the trace of the Triebel-Lizorkin-Lorentz spaces $F_{q}^{s}L_{p,r}(\R^{n})$. 

We will need first some technical results. 
\begin{defi}\label{def:weighted T-L-L sequences}
Let $0 < p < \infty$, $ 0 < q,r \leq \infty$ and $\alpha > -1$. We define $\mathbf{f}_{q}L_{p,r}(\R^{n},w_{\alpha})$ as the set of all sequences $\lambda=(\lambda_{j,m})\subset \C$ such that
\[
\norm{\lambda|\mathbf{f}_{q}L_{p,r}(\R^{n},w_{\alpha})} := \Big\|\Big(\sum_{j\in\N_0} \sum_{m \in \Z^{n}} | \lambda_{j,m} \chi_{j,m} (\cdot)|^{q}\Big)^{1/q}|L_{p,r}(\R^{n},w_{\alpha}) \Big\| < \infty.
\]
If $p=r$, we use the notation $\mathbf{f}_{p,q}(\R^{n},w_{\alpha}) := \mathbf{f}_{q}L_{p,\magenta{p}}(\R^{n},w_{\alpha})=\mathbf{f}_q L_p(\R^n, w_\alpha)$. 
\end{defi}
\begin{rmk}\label{rmk:equivalent quasi-norm}
Under the hypothesis of Proposition \ref{prop:equivalent f norm}, we can also prove that
\[
\norm{\lambda|\mathbf{f}_{p,q}(\R^{n},w_{\alpha})} \sim \Big\|\Big(\sum_{j\in \N_{0}} \sum_{m \in \Z^{n}} \abs{\lambda_{j,m}}^{q} \chi_{E_{j,m}}(\cdot) \Big)^{1/q}|L_{p}(\R^{n}, w_{\alpha})\Big\|.
\]
\end{rmk}
\begin{prop}\label{prop:int weighted T-L sequences}
Let $0 < p_1 < p_2 < \infty$, $ 0 < \theta < 1$, $\frac{1}{p} = \frac{1-\theta}{p_1}+\frac{\theta}{p_2}$, $ 0 < q,r\leq \infty$, and $ -1 < \alpha$. Then
\[
(\mathbf{f}_{p_1,q}(\R^{n},w_\alpha), \mathbf{f}_{p_2,q}(\R^{n},w_{\alpha}))_{\theta,r} = \mathbf{f}_{q}L_{p,r}(\R^{n},w_{\alpha}),
\]
with equivalent quasi-norms. 
\end{prop}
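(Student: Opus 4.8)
The plan is to realise $\mathbf{f}_{p,q}(\R^n,w_\alpha)$ as a retract of the vector-valued weighted Lebesgue space $L_p(\ell_q,w_\alpha)$, with a retraction and a co-retraction that do not depend on $p$, and then to read off the assertion from the interpolation formula~\eqref{eq:interpolation formula Lorentz}. As co-retraction I would take $J\colon\lambda\mapsto\big(\sum_{m\in\Z^n}\lambda_{j,m}\chi_{j,m}\big)_{j\in\No}$. Since for fixed $j$ the dyadic cubes $Q_{j,m}$ are pairwise disjoint, $\sum_m|\lambda_{j,m}\chi_{j,m}(x)|^q=\big|\sum_m\lambda_{j,m}\chi_{j,m}(x)\big|^q$, so $J$ is an isometry from $\mathbf{f}_{p,q}(\R^n,w_\alpha)$ onto a subspace of $L_p(\ell_q,w_\alpha)$, and equally from $\mathbf{f}_qL_{p,r}(\R^n,w_\alpha)$ onto a subspace of $L_{p,r}(\ell_q,w_\alpha)$, for every admissible choice of parameters. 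As retraction I would use the dyadic averaging operator $P\colon(G_j)_j\mapsto\big(|Q_{j,m}|^{-1}\int_{Q_{j,m}}G_j\big)_{j\in\No,\,m\in\Z^n}$, for which $P\circ J=\id$ is immediate because $J\lambda$ takes the constant value $\lambda_{j,m}$ on $Q_{j,m}$.

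Granting for the moment that $P$ maps $L_{p_i}(\ell_q,w_\alpha)$ boundedly into $\mathbf{f}_{p_i,q}(\R^n,w_\alpha)$ for $i=1,2$, the rest is short. By~\eqref{eq:interpolation formula Lorentz}, applied with $A=\ell_q$, $\Omega=\R^n$, $\mu=w_\alpha\dint x$, one has $(L_{p_1}(\ell_q,w_\alpha),L_{p_2}(\ell_q,w_\alpha))_{\theta,r}=L_{p,r}(\ell_q,w_\alpha)$ with equivalent quasi-norms. Since $J$ and $P$ are bounded on the corresponding endpoints and $P\circ J=\id$, the retract/co-retract method shows that real interpolation of the couple $(\mathbf{f}_{p_1,q}(\R^n,w_\alpha),\mathbf{f}_{p_2,q}(\R^n,w_\alpha))$ produces a retract of $L_{p,r}(\ell_q,w_\alpha)$ via the \emph{same} $J,P$, whence $\|\lambda|(\mathbf{f}_{p_1,q}(\R^n,w_\alpha),\mathbf{f}_{p_2,q}(\R^n,w_\alpha))_{\theta,r}\|\sim\|J\lambda|L_{p,r}(\ell_q,w_\alpha)\|=\|\lambda|\mathbf{f}_qL_{p,r}(\R^n,w_\alpha)\|$. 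Here only the lower estimate (the inclusion $\mathbf{f}_qL_{p,r}\hookrightarrow(\mathbf{f}_{p_1,q},\mathbf{f}_{p_2,q})_{\theta,r}$) uses the boundedness of $P$; the reverse inclusion already follows from $J$ being a contraction into both endpoints together with the interpolation property.

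The boundedness of $P$ is the heart of the matter. Writing $Q_{j,m(x)}$ for the cube of generation $j$ containing $x$, we have $\big|\sum_m\lambda_{j,m}\chi_{j,m}(x)\big|\le|Q_{j,m(x)}|^{-1}\int_{Q_{j,m(x)}}|G_j|\lesssim M(|G_j|)(x)$, hence $\|PG|\mathbf{f}_{p_i,q}(\R^n,w_\alpha)\|\lesssim\big\|\big(\sum_jM(|G_j|)(\cdot)^q\big)^{1/q}|L_{p_i}(\R^n,w_\alpha)\big\|$; by Lemma~\ref{lem:maximal inequality 2} (or Lemma~\ref{lem:maximal inequality 1} applied to $M(\sup_j|G_j|)\ge\sup_jM(|G_j|)$ when $q=\infty$) the right-hand side is $\lesssim\|G|L_{p_i}(\ell_q,w_\alpha)\|$ as soon as $1<q\le\infty$ and $w_\alpha\in\mathcal{A}_{p_i}$, i.e.\ as soon as $p_i>r_0(w_\alpha)$. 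This is also the only delicate point, because for general $0<p_1<p_2<\infty$ and $0<q\le\infty$ one need not have $p_i>r_0(w_\alpha)$ or $q>1$.

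I would remove this restriction by reducing to the favourable range via an entrywise power. Fix $0<\rho<\min\{p_1/r_0(w_\alpha),\,q,\,1\}$ and set $\Pi_\rho\lambda=\big(|\lambda_{j,m}|^{\rho}\operatorname{sgn}\lambda_{j,m}\big)_{j,m}$, together with $P_i=p_i/\rho$, $Q=q/\rho$, $P=p/\rho$, $R=r/\rho$; then $r_0(w_\alpha)<P_1<P_2<\infty$, $1<Q\le\infty$, and $1/P=(1-\theta)/P_1+\theta/P_2$. Using $\chi_{j,m}^{Q}=\chi_{j,m}$ and the elementary Lorentz-space identity $\|h^{\rho}|L_{P,R}(\R^n,w_\alpha)\|\sim\|h|L_{p,r}(\R^n,w_\alpha)\|^{\rho}$, one checks that $\Pi_\rho$ is, up to multiplicative constants, a $\rho$-homogeneous isometry from $\mathbf{f}_{p_i,q}(\R^n,w_\alpha)$ onto $\mathbf{f}_{P_i,Q}(\R^n,w_\alpha)$ and from $\mathbf{f}_qL_{p,r}(\R^n,w_\alpha)$ onto $\mathbf{f}_QL_{P,R}(\R^n,w_\alpha)$. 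As $\Pi_\rho$ and $\Pi_{1/\rho}$ are quasi-additive (here $0<\rho\le1$) and all these spaces are solid, a routine computation with the $K$-functional — the ``power theorem'' of real interpolation, cf.~\cite{BL} — gives $K(t,\lambda;\mathbf{f}_{p_1,q},\mathbf{f}_{p_2,q})^{\rho}\sim K(t^{\rho},\Pi_\rho\lambda;\mathbf{f}_{P_1,Q},\mathbf{f}_{P_2,Q})$ uniformly in $t>0$; substituting $t=s^{1/\rho}$ in the defining integral then identifies $\|\lambda|(\mathbf{f}_{p_1,q},\mathbf{f}_{p_2,q})_{\theta,r}\|$ with the $\rho$-th root of $\|\Pi_\rho\lambda|(\mathbf{f}_{P_1,Q},\mathbf{f}_{P_2,Q})_{\theta,R}\|$. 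Since the first three paragraphs now apply to the couple $(\mathbf{f}_{P_1,Q},\mathbf{f}_{P_2,Q})$, the latter interpolation space is $\mathbf{f}_QL_{P,R}(\R^n,w_\alpha)$, which under $\Pi_{1/\rho}$ is exactly $\mathbf{f}_qL_{p,r}(\R^n,w_\alpha)$; this yields the desired equality with equivalent quasi-norms. The main obstacle, then, is precisely the bound on $P$ together with its extension beyond the range $p_i>r_0(w_\alpha)$, $q>1$; everything else is bookkeeping with the formula~\eqref{eq:interpolation formula Lorentz} and the power map.
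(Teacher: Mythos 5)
Your argument is correct, but it is organised differently from the paper's proof, and the comparison is instructive. The paper works with a single pair of maps valid for \emph{all} admissible parameters: the linear isometry $R(\lambda)=(\lambda_{j,m}\chi_{j,m})$ into $L_{p_i}(\ell_q(\bl{q}),w_\alpha)$ and the \emph{nonlinear} $A$-averaged projection $P_A(g_{j,m})=\big(|Q_{j,m}|^{-1}\int_{Q_{j,m}}|g_{j,m}|^A\big)^{1/A}$ with $0<A<\min(p_1/r_0(w_\alpha),q)$; since $P_A$ is only quasi-additive and $P_A\circ R(\lambda)=|\lambda|$ rather than $\lambda$, the retract method is replaced by a direct two-sided comparison of $K$-functionals, using the lattice description of $K$ from \cite{CM} and then \eqref{eq:interpolation formula Lorentz}. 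You instead keep the projection \emph{linear} (plain dyadic averaging), so the classical retract/coretract method of \cite{BL,T78} applies verbatim, but only in the favourable range $p_i>r_0(w_\alpha)$, $1<q\leq\infty$, where Lemmas~\ref{lem:maximal inequality 1} and \ref{lem:maximal inequality 2} control the averaging operator; the general case is then recovered by the power map $\Pi_\rho$ and the power theorem. The two proofs rest on exactly the same two pillars — the weighted (vector-valued) maximal inequality entered through a small auxiliary exponent ($A$ in the paper, $\rho$ for you), and the weighted Lorentz interpolation formula \eqref{eq:interpolation formula Lorentz} — and your reduction step is not free of the paper's "nonlinear" issue either: since $\Pi_\rho$ is not additive, the equivalence $K(t,\lambda;\mathbf{f}_{p_1,q},\mathbf{f}_{p_2,q})^{\rho}\sim K(t^{\rho},\Pi_\rho\lambda;\mathbf{f}_{P_1,Q},\mathbf{f}_{P_2,Q})$ needs precisely the solid/lattice form of the $K$-functional (as in \cite[Lemma 3.1]{CM}) together with the pointwise inequalities $|a+b|^{\rho}\leq|a|^{\rho}+|b|^{\rho}$ and $|a+b|^{1/\rho}\lesssim|a|^{1/\rho}+|b|^{1/\rho}$, which you correctly flag. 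What your route buys is modularity (a clean linear retract statement in the Banach-range, plus an abstract power-theorem reduction); what the paper's route buys is a one-step proof without introducing the auxiliary couple $(\mathbf{f}_{P_1,Q},\mathbf{f}_{P_2,Q})$. The remaining small points in your sketch (disjointness of same-level cubes to identify $\|J\lambda\|$ with the $\mathbf{f}$-quasi-norm, $\sup_jM|G_j|\leq M(\sup_j|G_j|)$ for $q=\infty$, and $1/P=(1-\theta)/P_1+\theta/P_2$) all check out, so I see no gap.
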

\begin{proof}
For the case of unweighted spaces this result can be found in \cite[Theorem 3.5]{BCT}. Let $i=1,2$. Given any $\lambda \in \mathbf{f}_{p_i,q}(\R^{n},w_\alpha)$, let $R(\lambda)$ be the sequence of functions defined as
\[
R(\lambda)(x) = (\lambda_{j,m} \chi_{j,m}(x)).
\]
Then $R: \mathbf{f}_{p_i,q}(\R^{n},w_\alpha) \rightarrow L_{p_i}(\ell_{q}(\bl{q}), w_\alpha)$ is bounded and $\norm{R(\lambda)|L_{p_i}(\ell_{q}(\bl{q}), w_\alpha)}= \norm{\lambda|\mathbf{f}_{p_i,q}(\R^{n},w_\alpha)}$, $i=1,2$. 

Let $0 < A < \min( \frac{p_1}{r_{0}(w_\alpha)}, \frac{p_2}{r_{0}(w_\alpha)}, q)$. We consider now the operator 
\[P_{A}: L_{p_{i}}(\ell_{q}(\bl{q}), w_\alpha) \rightarrow \mathbf{f}_{p_i,q}(\R^{n},w_{\alpha})\]
 defined as
\[
P_{A}(g_{j,m}) := \Big( \Big(\frac{1}{\abs{Q_{j,m}}} \int_{Q_{j,m}} \abs{g_{j,m}}^{A} (y) \dint y \Big)^{1/A} \Big).
\]
Applying Lemma \ref{lem:maximal inequality 2}, one obtains
\begin{align*}
&\norm{P_{A} (g_{j,m})|\mathbf{f}_{p_i,q} (\R^{n},w_{\alpha})} \\
&\hspace{2cm}= \Big\| \Big(\sum_{\substack{j \in \N_{0},\\ m \in \Z^{n}}} \Big(\frac{1}{\abs{Q_{j,m}}} \int_{Q_{j,m}} \abs{g_{j,m}}^{A} (y) \dint y \Big)^{q/A} \chi_{j,m}(\cdot) \Big)^{A/q}|L_{p_{i}/A}(\R^{n}, w_{\alpha}) \Big\|^{1/A}\\
&\hspace{2cm}\leq \Big\|\Big(\sum_{\substack{j \in \N_0, \\ m \in \Z^{n}}} (M \abs{g_{j,m}}^{A})^{q/A} \Big)^{A/q}|L_{p_{i}/A} (\R^{n}, w_{\alpha}) \Big\|^{1/A}\\
&\hspace{2cm}\lesssim \Big\|\Big(\sum_{\substack{j \in \N_0, \\ m \in \Z^{n}}} ( \abs{g_{j,m}}^{A})^{q/A} \Big)^{A/q}|L_{p_{i}/A} (\R^{n}, w_{\alpha}) \Big\|^{1/A} \\
&\hspace{2cm} = \norm{(g_{j,m})|L_{p_i}(\ell_{q}(\bl{q}),w_\alpha)}.
\end{align*}
Note also that $P_{A} \circ R (\lambda) = \abs{\lambda}$, for every $\lambda \in \mathbf{f}_{p_i,q}(\R^{n},w_{\alpha})$. 

The linearity and boundedness of $R$ imply that for every $\lambda \in \mathbf{f}_{p_1,q}(\R^{n},w_{\alpha}) + \mathbf{f}_{p_2,q}(\R^{n},w_{\alpha})$ 
\begin{align}\label{eq:K-functional}
K(t, R(\lambda); L_{p_{1}}(\ell_{q}(\bl{q}), w_{\alpha}), L_{p_2}(\ell_{q}(\bl{q}), w_{\alpha})) \lesssim K(t, \lambda; \mathbf{f}_{p_1,q}(\R^{n},w_{\alpha}), \mathbf{f}_{p_2,q}(\R^{n},w_{\alpha})).
\end{align}
On the other hand, due to the lattice properties of $\mathbf{f}_{p_{i}, q}(\R^{n},w_{\alpha})$, we can rewrite its $K$-functional as
\begin{align*}
&K(t,\lambda; \mathbf{f}_{p_1,q}(\R^{n},w_{\alpha}), \mathbf{f}_{p_2,q}(\R^{n},w_{\alpha})) \\
& \hspace{1cm} = \inf \Big\lbrace \norm{\lambda^{1}|\mathbf{f}_{p_{1},q}(\R^{n},w_{\alpha})} + t \norm{\lambda^{2}|\mathbf{f}_{p_2,q}(\R^{n},w_{\alpha})}: \abs{\lambda_{j,m}} \leq \lambda_{j,m}^{1} + \lambda_{j,m}^{2},\lambda_{j,m}^{1},\lambda_{j,m}^{2} \geq 0 \Big\rbrace, 
\end{align*}
(see, for example, \cite[Lemma 3.1]{CM}). This together with the fact that $\abs{P_{A} g} \leq C_{A} (P_{A}(\abs{g_{1}}) + P_{A}(\abs{g_2}))$ if $g = g_1+g_2$, imply that
\begin{align*}
K(t, P_{A}g; \mathbf{f}_{p_1,q}(\R^{n},w_{\alpha}), \mathbf{f}_{p_2,q}(\R^{n},w_{\alpha})) &\lesssim \norm{P_{A}(\abs{g_1})|\mathbf{f}_{p_1,q}(\R^{n},w_{\alpha})}+ t \norm{P_{A}(\abs{g_{2}})|\mathbf{f}_{p_2,q}(\R^{n},w_{\alpha})} \\
&\lesssim \norm{g_{1}|L_{p_{1}}(\ell_{q}(\bl{q}), w_{\alpha})} + t \norm{g_{2}|L_{p_{2}}(\ell_{q}(\bl{q}), w_\alpha)}
\end{align*}
and taking the infimum over all possible decompositions we obtain that
\[
K(t, P_{A} g; \mathbf{f}_{p_1,q}(\R^{n},w_{\alpha}), \mathbf{f}_{p_2,q}(\R^{n},w_{\alpha})) \lesssim K(t, g; L_{p_1}(\ell_{q}(\bl{q}), w_{\alpha}), L_{p_{2}}(\ell_{q}(\bl{q}), w_{\alpha})) 
\]
for every $g = (g_{j,m}) \in L_{p_1}(\ell_{q}(\bl{q}), w_{\alpha})+ L_{p_{2}}(\ell_{q}(\bl{q}), w_{\alpha})$. Thus
\begin{align}\label{eq:K-functional 2}
K(t, \lambda; \mathbf{f}_{p_1,q}(\R^{n},w_{\alpha}), \mathbf{f}_{p_2,q}(\R^{n},w_\alpha)) &= K(t, \abs{\lambda}; \mathbf{f}_{p_1,q}(\R^{n},w_{\alpha}), \mathbf{f}_{p_2,q} (\R^{n},w_{\alpha})) \nonumber \\
&= K(t, P_{A} \circ R (\lambda); \mathbf{f}_{p_1,q}(\R^{n},w_{\alpha}), \mathbf{f}_{p_2,q}(\R^{n},w_\alpha)) \nonumber \\
&\lesssim K(t, R (\lambda); L_{p_1}(\ell_{q}(\bl{q}), w_{\alpha}), L_{p_{2}}(\ell_{q}(\bl{q}), w_{\alpha})).
\end{align}
From \eqref{eq:K-functional}, \eqref{eq:K-functional 2} and \eqref{eq:interpolation formula Lorentz}, we derive that
\begin{align*}
\norm{\lambda|(\mathbf{f}_{p_1,q}(\R^{n},w_\alpha), \mathbf{f}_{p_2,q}(\R^{n},w_\alpha))_{\theta, r}}&\sim \norm{R(\lambda)|(L_{p_1}(\ell_{q}(\bl{q}), w_{\alpha}), L_{p_{2}}(\ell_{q}(\bl{q}), w_{\alpha}))_{\theta, r}}\\
& = \norm{R(\lambda)|L_{p,r}(\ell_{q}(\bl{q}), w_{\alpha})} = \norm{\lambda|\mathbf{f}_{q}L_{p,r}(\R^{n},w_{\alpha})}.
\end{align*} 
\end{proof}
\begin{rmk}\label{rmk:equivalent quasi-norm Lorentz}
In the hypothesis of Definition \ref{def:weighted T-L-L sequences}, if $E_{j,m} \subset Q_{j,m}$ and $\abs{E_{j,m}} \sim \abs{Q_{j,m}}$, $j \in \N_0$ and $m \in \Z^{n}$, then applying Remark \ref{rmk:equivalent quasi-norm} and Proposition \ref{prop:int weighted T-L sequences}, we have the following equivalence of quasi-norms: 
\[
\norm{\magenta{\lambda}|\mathbf{f}_{q}L_{p,r}(\R^{n},w_{\alpha})} \sim \Big\|\Big(\sum_{j\in \N_0} \sum_{m \in \Z^{n}} \abs{\lambda_{j,m}}^{q} \chi_{E_{j,m}} (\cdot) \Big)^{1/q} | {L_{p,r}(\R^{n}, w_{\alpha})}\Big\|,
\] 
for every $\lambda \in \mathbf{f}_{q}L_{p,r}(\R^{n},w_{\alpha})$. 
\end{rmk}
Let $k \in \N$ and let $(A_\ell)_{\ell=1}^{k}$ be a finite sequence of quasi-Banach spaces. We consider the space $\prod_{\ell=1}^{k} A_{\ell} = A_{1} \times \dots \times A_{k}$ quasi-normed by
\[
\Big\|(a_1,\dots,a_k)|\prod_{\ell=1}^{k} A_{\ell}\Big\| = \sum_{\ell=1}^{k} \norm{a_{\ell}|A_\ell}, \quad a_{\ell} \in A_{\ell},\  \ell=1,\dots,k.
\]


\begin{prop}\label{prop:interpolation of products}
Let $((A^1_{\ell}, A^2_{\ell}))_{\ell=1}^{k}$ be a finite sequence of quasi-Banach couples and put $B_{i} = \prod_{\ell=1}^{k} A^i_{\ell}$, $i=1,2$. Then $(B_1, B_2)$ is a quasi-Banach couple and for any $ 0 < \theta < 1$ and $ 0 < r \leq \infty$, 
\[
(B_1, B_2)_{\theta, r} = \prod_{\ell=1}^{k} (A^1_{\ell}, A^2_{\ell})_{\theta,r}, 
\]
with equivalent quasi-norms. 
\end{prop}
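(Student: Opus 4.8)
The plan is to reduce the statement to a pointwise identity for the $K$-functional of the product couple and then to split an $L_r$-quasi-norm over a finite sum; here $K(t,\cdot)$ always denotes Peetre's $K$-functional recalled above.

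First I would check that $(B_1,B_2)$ is genuinely a quasi-Banach couple. Each $B_i=\prod_{\ell=1}^k A^i_\ell$ is complete, since completeness of a finite product equipped with the sum expression is coordinatewise, and $\|\cdot\,|B_i\|$ is a quasi-norm whose quasi-norm constant is bounded by $\max_{1\le \ell\le k}$ of those of the $A^i_\ell$. If $\mathcal{A}_\ell$ denotes a Hausdorff topological vector space into which both $A^1_\ell$ and $A^2_\ell$ embed continuously, then $B_1$ and $B_2$ both embed continuously into the (Hausdorff) product $\prod_{\ell=1}^k\mathcal{A}_\ell$, so $(B_1,B_2)$ is a quasi-Banach couple. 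Under this identification $B_1+B_2=\prod_{\ell=1}^k\bigl(A^1_\ell+A^2_\ell\bigr)$ as sets, which is exactly what is needed for $K(t,\cdot\,;B_1,B_2)$ to make sense of an element of the right-hand side.

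The key step is the identity
\[
K\bigl(t,(a_1,\dots,a_k);B_1,B_2\bigr)=\sum_{\ell=1}^k K\bigl(t,a_\ell;A^1_\ell,A^2_\ell\bigr),\qquad t>0 .
\]
This follows straight from the definitions: a decomposition $(a_1,\dots,a_k)=b^1+b^2$ with $b^i=(b^i_1,\dots,b^i_k)\in B_i$ is nothing but a choice, made independently for each $\ell$, of $a_\ell=b^1_\ell+b^2_\ell$ with $b^i_\ell\in A^i_\ell$, and for any such decomposition
\[
\bigl\|b^1\,|B_1\bigr\|+t\bigl\|b^2\,|B_2\bigr\|=\sum_{\ell=1}^k\Bigl(\bigl\|b^1_\ell\,|A^1_\ell\bigr\|+t\bigl\|b^2_\ell\,|A^2_\ell\bigr\|\Bigr),
\]
so taking the infimum factorizes over $\ell$, giving equality.

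It then remains to feed this into the definition of the real interpolation quasi-norm. For finitely many nonnegative measurable functions $f_1,\dots,f_k$ on $(0,\infty)$ equipped with $\dint t/t$ one has
\[
\Bigl\|\sum_{\ell=1}^k f_\ell\;\Big|\;L_r\Bigr\|\sim\sum_{\ell=1}^k\|f_\ell\,|L_r\|,
\]
with constants depending only on $k$ and $r$: the upper bound is the quasi-triangle inequality in $L_r$, used in the form $\|f+g\|_r^{\min(r,1)}\le\|f\|_r^{\min(r,1)}+\|g\|_r^{\min(r,1)}$ iterated $k$ times, and the lower bound follows from $\sum_\ell f_\ell\ge\max_\ell f_\ell$. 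Applying this with $f_\ell(t)=t^{-\theta}K(t,a_\ell;A^1_\ell,A^2_\ell)$ and invoking the $K$-functional identity yields
\[
\bigl\|a\,|(B_1,B_2)_{\theta,r}\bigr\|\sim\sum_{\ell=1}^k\bigl\|a_\ell\,|(A^1_\ell,A^2_\ell)_{\theta,r}\bigr\|=\Bigl\|a\;\Big|\;\prod_{\ell=1}^k(A^1_\ell,A^2_\ell)_{\theta,r}\Bigr\| ,
\]
which is the assertion. I do not foresee a genuine obstacle; the only points requiring care are the quasi-Banach bookkeeping — that the sum expression is a quasi-norm, the correct exponent $\min(r,1)$ in the $L_r$ quasi-triangle inequality when $r<1$, and the harmless identification $B_1+B_2=\prod_\ell(A^1_\ell+A^2_\ell)$ needed to make $K(t,\cdot\,;B_1,B_2)$ meaningful.
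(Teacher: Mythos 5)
Your proof is correct and follows essentially the same route as the paper's: both reduce the claim to the (near-)additivity of the $K$-functional over the coordinates and then use that finitely many terms can be pulled in and out of the $L_r(\dint t/t)$ quasi-norm. Your observation that the $K$-functional of the product couple is \emph{exactly} the sum of the coordinate $K$-functionals (since the infimum factorizes over $\ell$) is a small streamlining of the paper's argument, which proves one inequality via an $\eps$-optimal decomposition and leaves the converse as ``analogous''.
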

\begin{proof}
First note that 
\[
B_{i} \hookrightarrow \prod_{\ell=1}^{k} (A^1_{\ell} + A^2_{\ell}), \text{ for } i=1,2.
\]
Indeed, let $a =(a_1,\dots,a_k) \in B_i$, then 
\begin{align*}
\norm{a|\prod_{\ell=1}^{k} (A^1_{\ell} + A^2_{\ell})} = \sum_{\ell=1}^{k} \norm{a_{\ell}|A^1_{\ell}+A^2_{\ell}} \leq \sum_{\ell=1}^{k} \norm{a_{\ell}|A^i_{\ell}} = \norm{a|B_{i}}.
\end{align*}
Now we prove that $(B_1, B_2)_{\theta,\magenta{r}} \hookrightarrow \prod_{\ell=1}^{k} (A^1_{\ell}, A^2_{\ell})_{\theta,\magenta{r}}$ . Fix $t> 0$ and let $a = (a_1, \dots ,a_{k}) \in B_1 + B_2$. For every $\eps > 0$ we can decompose $a = a^{1}+a^{2}$ with $a^{i} = (a_{1}^{i},\dots,a_{k}^{i}) \in B_{i}$, $i=1,2$ and such that 
\[
\norm{a^{1}|{B_1}}+ t \norm{a^2|{B_{2}}} = \sum_{\ell=1}^{k} \Big(\norm{a_{\ell}^{1}|A^1_{\ell}}+ t \norm{a_{\ell}^{2}|A^2_{\ell}}\Big) \leq K(t,a; B_1, B_2) + \eps.
\]
Then, $\sum_{\ell=1}^{k} K(t, a_{\ell}; A^1_{\ell}, A^2_{\ell}) \leq \sum_{\ell=1}^{k} (\norm{a_{\ell}^{1}|A^1_{\ell}} + t \norm{a_{\ell}^{2}|A^2_{\ell}}) \leq K(t,a; B_1, B_2) + \eps$, for every $\eps > 0$. From here we deduce that
\begin{align*}
\Big\|{a|\prod_{\ell=1}^{k} (A^1_{\ell}, A^2_{\ell})_{\theta, r}}\Big\| &= \sum_{\ell=1}^{k} \norm{a_{\ell}|(A^1_{\ell}, A^2_{\ell})_{\theta,r}} \sim \Big(\int_{0}^{\infty}t^{-\theta r} \Big(\sum_{\ell=1}^{k} K(t,a_\ell; A^1_{\ell}, A^2_{\ell}) \Big)^{r} \frac{\dint t}{t}\Big)^{1/r}\\
&\leq \left(\int_{0}^{\infty} t^{-\theta r} K(t,a; B_1, B_2)^{r} \frac{\dint t}{t} \right)^{1/r} = \norm{a|(B_1, B_2)_{\theta,r}}.
\end{align*}
An analogous argument works for the other direction. 
\end{proof}
Let $E_{j,m}=Q_{j,m}^{(n-k)} \times (2^{-j-2}, 2^{-j-1})^{k}$ be as in the proof of Theorem \ref{thm:trace 2}.
\begin{prop}\label{prop:interpolation besov}
Let $ 0 < p_1 < p_2 < \infty$, $\alpha > -1$, $n \in \N$, $k=1,2,\dots,n-1$, $s-\frac{\alpha+k}{p_{i}} > (n-k)(\frac{1}{p_{i}}-1)_{+}$ for $i=1,2$, $ 0 < \theta < 1$ and $ 0 < r \leq \infty$. Then 
\[
(B_{p_1}^{s-\frac{\alpha + k}{p_{1}}}(\R^{n-k}), B_{p_{2}}^{s-\frac{\alpha + k}{p_2}}(\R^{n-k}))_{\theta, r}
\]
is the set of all $f \in \Sc'(\R^{n-k})$ that admit a wavelet representation on $\R^{n-k}$
\begin{align}\label{eq:wavelet repres. in R(n-k)}
f(x) = \sum_{m \in \Z^{n-k}} \lambda_{m} \psi_{m} (x) + \sum_{\ell=2}^{2^{n-k}} \sum_{j \in \N_0} \sum_{m \in \Z^{n-k}} \lambda_{m}^{j, \ell} 2^{-j(n-k)/2} \psi_{\ell,m}^{j}(x) \quad \text{for every } x \in \R^{n-k}
\end{align}
and 
\[\norm{f|(B_{p_1}^{s-\frac{\alpha + k}{p_{1}}}(\R^{n-k}), B_{p_{2}}^{s-\frac{\alpha + k}{p_2}}(\R^{n-k}))_{\theta, r}} \sim \norm{\Lambda^{s}(\lambda)|L_{p,r}(\R^{n}, w_{\alpha})} < \infty,\]
where
\[
\Lambda^{s}(\lambda)(x) = \sum_{m \in \Z^{n-k}} \abs{\lambda_{m}} \chi_{E_{0,m}}(x) + \sum_{\ell=2}^{2^{n-k}} \sum_{m \in \Z^{n-k}} \sum_{j=0}^{\infty} 2^{js} |\lambda_{m}^{j, \ell}|\chi_{E_{j,m}} (x), \ x \in \R^{n}.
\]
\end{prop}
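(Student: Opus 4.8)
The plan is to transport the whole question to the level of wavelet coefficient sequences, where the target quasi-norm $\norm{\Lambda^{s}(\lambda)|L_{p,r}(\R^{n},w_{\alpha})}$ will be recognised as built from weighted Lorentz sequence spaces, to which the interpolation formula \eqref{eq:interpolation formula Lorentz} applies \emph{via} Proposition~\ref{prop:int weighted T-L sequences}. Write $\sigma_{i}:=s-\frac{\alpha+k}{p_{i}}$, $i=1,2$. First I would fix an order $u$ large enough that Theorem~\ref{thm:wavelet Besov spaces} characterises both $B_{p_{1}}^{\sigma_{1}}(\R^{n-k})$ and $B_{p_{2}}^{\sigma_{2}}(\R^{n-k})$ by the \emph{same} wavelet system. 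Then the analysis operator $I$ and the synthesis operator $I^{-1}$ are bounded between $B_{p_{i}}^{\sigma_{i}}(\R^{n-k})$ and $b_{p_{i},p_{i}}^{\sigma_{i}}(\R^{n-k})$ for $i=1,2$, with $I^{-1}\circ I=\id$, so by the interpolation property $I$ identifies $(B_{p_{1}}^{\sigma_{1}}(\R^{n-k}),B_{p_{2}}^{\sigma_{2}}(\R^{n-k}))_{\theta,r}$ with $(b_{p_{1},p_{1}}^{\sigma_{1}}(\R^{n-k}),b_{p_{2},p_{2}}^{\sigma_{2}}(\R^{n-k}))_{\theta,r}$; in particular $f\in\Sc'(\R^{n-k})$ lies in the interpolation space precisely when it admits a representation \eqref{eq:wavelet repres. in R(n-k)} with coefficient sequence $\lambda$ in the latter space, and the quasi-norms are equivalent.

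Next I would split the sequence space. Reading Definition~\ref{def:Besov sequence space} with $q=p$ and with $n-k$ in place of $n$ shows that, isometrically for the product quasi-norm, $b_{p_{i},p_{i}}^{\sigma_{i}}(\R^{n-k})=\prod_{\ell=1}^{2^{n-k}}Y_{\ell}^{(i)}$, where $Y_{1}^{(i)}=\ell_{p_{i}}(\Z^{n-k})$ and, for $\ell=2,\dots,2^{n-k}$, $Y_{\ell}^{(i)}$ carries the quasi-norm $\big(\sum_{j\in\No}\sum_{m\in\Z^{n-k}}2^{j(\sigma_{i}-\frac{n-k}{p_{i}})p_{i}}|\lambda_{m}^{j,\ell}|^{p_{i}}\big)^{1/p_{i}}$. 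By Proposition~\ref{prop:interpolation of products} it then suffices to identify each couple $(Y_{\ell}^{(1)},Y_{\ell}^{(2)})_{\theta,r}$. To this end I would realise $Y_{\ell}^{(i)}$ as a retract of an auxiliary weighted sequence space $\mathbf{f}_{p_{i},q}(\R^{n},w_{\alpha})$ with a single, $i$-independent $q$ (say $q=1$), as follows. With the pairwise disjoint sets $E_{j,m}=Q_{j,m}^{(n-k)}\times(2^{-j-2},2^{-j-1})^{k}\subset Q_{j,(m,0)}^{(n)}$ from the proof of Theorem~\ref{thm:trace 2}, for which $|E_{j,m}|\sim|Q_{j,(m,0)}^{(n)}|$ and, since $\alpha>-1$, $\int_{E_{j,m}}w_{\alpha}\dint z\sim 2^{-j(\alpha+n)}$ and $\int_{E_{0,m}}w_{\alpha}\dint z\sim 1$, let $\tau^{\ell}$ be the linear map placing $2^{js}\lambda_{m}^{j,\ell}$ at the index $(j,(m,0))$ and zero elsewhere (for $\ell=1$: $\lambda_{m}$ at $(0,(m,0))$, zero elsewhere). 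Since $\tau^{\ell}(\lambda)$ has at most one nonzero entry over each $E_{j,m}$, the value of $q$ is immaterial, and Remark~\ref{rmk:equivalent quasi-norm} applied with the family $(E_{j,m})$ yields $\norm{\tau^{\ell}(\lambda)|\mathbf{f}_{p_{i},q}(\R^{n},w_{\alpha})}\sim\norm{\lambda|Y_{\ell}^{(i)}}$ for $i=1,2$. Conversely, $\rho^{\ell}(g):=(2^{-js}g_{j,(m,0)})_{j,m}$ (resp. $(g_{0,(m,0)})_{m}$ for $\ell=1$) is a linear retraction with $\rho^{\ell}\circ\tau^{\ell}=\id$, and its boundedness, uniformly in $i$, follows from $\chi_{E_{j,m}}\le\chi_{j,(m,0)}$, the disjointness of the $E_{j,m}$, and $\int_{E_{j,m}}w_{\alpha}\dint z\sim 2^{-j(\alpha+n)}$.

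Granting this, the interpolation property and Proposition~\ref{prop:int weighted T-L sequences} show that $(Y_{\ell}^{(1)},Y_{\ell}^{(2)})_{\theta,r}$ is a retract (via $\tau^{\ell}$, with retraction $\rho^{\ell}$) of $(\mathbf{f}_{p_{1},q}(\R^{n},w_{\alpha}),\mathbf{f}_{p_{2},q}(\R^{n},w_{\alpha}))_{\theta,r}=\mathbf{f}_{q}L_{p,r}(\R^{n},w_{\alpha})$ with $\frac{1}{p}=\frac{1-\theta}{p_{1}}+\frac{\theta}{p_{2}}$, and Remark~\ref{rmk:equivalent quasi-norm Lorentz} (again with the $E_{j,m}$) then gives $\norm{\lambda|(Y_{1}^{(1)},Y_{1}^{(2)})_{\theta,r}}\sim\norm{\sum_{m\in\Z^{n-k}}|\lambda_{m}|\chi_{E_{0,m}}|L_{p,r}(\R^{n},w_{\alpha})}$ and $\norm{\lambda|(Y_{\ell}^{(1)},Y_{\ell}^{(2)})_{\theta,r}}\sim\norm{\sum_{j\in\No}\sum_{m\in\Z^{n-k}}2^{js}|\lambda_{m}^{j,\ell}|\chi_{E_{j,m}}|L_{p,r}(\R^{n},w_{\alpha})}$ for $\ell=2,\dots,2^{n-k}$. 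Finally, Proposition~\ref{prop:interpolation of products} gives $\norm{\lambda|(b_{p_{1},p_{1}}^{\sigma_{1}}(\R^{n-k}),b_{p_{2},p_{2}}^{\sigma_{2}}(\R^{n-k}))_{\theta,r}}\sim\sum_{\ell=1}^{2^{n-k}}\norm{\lambda|(Y_{\ell}^{(1)},Y_{\ell}^{(2)})_{\theta,r}}$, and the right-hand side is the sum of the $L_{p,r}(\R^{n},w_{\alpha})$-quasi-norms of the $2^{n-k}$ nonnegative pieces of $\Lambda^{s}(\lambda)$. Since $\Lambda^{s}(\lambda)$ is their sum, the quasi-triangle inequality gives one direction and the pointwise bound $0\le(\text{each piece})\le\Lambda^{s}(\lambda)$ (hence a domination of distribution functions) the other, so this sum is $\sim\norm{\Lambda^{s}(\lambda)|L_{p,r}(\R^{n},w_{\alpha})}$. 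Chaining with the first paragraph proves the proposition.

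I expect the main obstacle to lie in the third step of the plan: one must run everything through the auxiliary spaces $\mathbf{f}_{p_{i},q}(\R^{n},w_{\alpha})$ with one and the same $q$ for $i=1,2$ so that Proposition~\ref{prop:int weighted T-L sequences} becomes applicable, verify that $\tau^{\ell}$ and $\rho^{\ell}$ are bounded \emph{uniformly} in $p_{1}$ and $p_{2}$, and exploit that on the images $\tau^{\ell}(\lambda)$ the inner $\ell_{q}$-sum collapses to a single term — which is precisely what decouples the argument from $q$ and makes Remarks~\ref{rmk:equivalent quasi-norm} and \ref{rmk:equivalent quasi-norm Lorentz} usable. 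The two-sided estimate $\int_{E_{j,m}}w_{\alpha}\dint z\sim 2^{-j(\alpha+n)}$, which is where the hypothesis $\alpha>-1$ genuinely enters, underlies all of these equivalences.
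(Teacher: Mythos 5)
Your proposal is correct and follows essentially the same route as the paper: reduction to the sequence spaces $b^{s-\frac{\alpha+k}{p_i}}_{p_i,p_i}(\R^{n-k})$ via Theorem~\ref{thm:wavelet Besov spaces}, a coretraction/retraction pair into (products of) the weighted spaces $\mathbf{f}_{p_i,q}(\R^n,w_\alpha)$ built on the disjoint sets $E_{j,m}$ with $\int_{E_{j,m}}w_\alpha\sim 2^{-j(\alpha+n)}$, and then Proposition~\ref{prop:int weighted T-L sequences}, Proposition~\ref{prop:interpolation of products} and Remarks~\ref{rmk:equivalent quasi-norm}, \ref{rmk:equivalent quasi-norm Lorentz}. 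The only difference is organisational — you split $b^{\sigma_i}_{p_i,p_i}$ into its $2^{n-k}$ factors first and run the retract argument componentwise with a general fixed $q$, whereas the paper uses a single operator pair $T$, $P$ with $q=1$ on the whole product — which does not change the substance of the argument.
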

\begin{proof}
By Theorem \ref{thm:wavelet Besov spaces}, we know that $(B_{p_1}^{s-\frac{\alpha + k}{p_{1}}}(\R^{n-k}), B_{p_{2}}^{s-\frac{\alpha + k}{p_2}}(\R^{n-k}))_{\theta, r}$ is the set of all $f \in \Sc'(\R^{n-k})$ that can be represented as in \eqref{eq:wavelet repres. in R(n-k)} and 
\[
\|f|(B_{p_1}^{s-\frac{\alpha + k}{p_{1}}}(\R^{n-k}), B_{p_{2}}^{s-\frac{\alpha + k}{p_2}}(\R^{n-k}))_{\theta, r}\|\sim \norm{\lambda|(b_{p_1}^{s-\frac{\alpha+k}{p_1}}(\R^{n-k}), b_{p_2}^{s-\frac{\alpha+k}{p_2}}(\R^{n-k}))_{\theta,r}}.
\]
Let $i=1,2$. Consider the operator $T: b_{p_i}^{s-\frac{\alpha + k}{p_{i}}}(\R^{n-k}) \rightarrow \prod_{\ell=1}^{2^{n-k}} \mathbf{f}_{p_i 1}(\R^{n}, w_{\alpha})$ defined as
\[
T((\lambda_{m}), (\lambda_{m}^{j,2}),\dots,(\lambda_{m}^{j,2^{n-k}})) = (\hat{\lambda}_{j,(m,M)}^{\ell})= \begin{cases} 
\lambda_{m} &\text{ if } \ell = 1, j =0 \text{ and } M=0, \\
2^{js} \lambda_{m}^{j, \ell} &\text{ if } \ell=2,\dots,2^{n-k} \text{ and } M=0, \\
0 &\text{ otherwise,}
\end{cases}
\]
with $j \in \N_{0}$, $(m,M) \in \R^{n-k}\times \R^{k}$ and $\ell=1,2,\dots,2^{n-k}$. From \eqref{eq:estimate g1} and \eqref{eq:estimate gl} with $q=1$, we deduce that $T$ is bounded. Now we consider the operator
\begin{align*}
P: \prod_{\ell=1}^{2^{n-k}} \mathbf{f}_{p_i,1}(\R^{n}, w_{\alpha}) &\longrightarrow b_{p_{i}}^{s-\frac{\alpha + k}{p_{i}}}(\R^{n-k})\\
(\lambda_{j, (m,M)}^{\ell} ) &\longrightarrow ((\lambda_{0,(m,0)}), (2^{-js}\lambda_{j,(m,0)}^{1}),\dots,(2^{-js}\lambda_{j,(m,0)}^{2^{n-k}})).
\end{align*}
Observe that $P \circ T = \id: b_{p_{i}}^{s-\frac{\alpha + k}{p_{i}}}(\R^{n-k}) \rightarrow b_{p_{i}}^{s-\frac{\alpha + k}{p_{i}}}(\R^{n-k})$, $i=1,2$ and using Remark \ref{rmk:equivalent quasi-norm}, 
\begin{align*}
&\norm{P(\lambda)|b_{p_{i}}^{s-\frac{\alpha + k}{p_{i}}}(\R^{n-k})} = \Big( \sum_{m \in \Z^{n}} |\lambda_{0,(m,0)}^{1}|^{p_i} \Big)^{1/p_{i}}+ \sum_{\ell=2}^{2^{n-k}} \Big( \sum_{j=0}^{\infty} \sum_{m \in \Z^{n}} |\lambda_{j,(m,1)}^{\ell}|^{p_{i}} 2^{-j(\alpha +k)}\Big)^{1/p_{i}}\\
&\ \ \ \ \ \ \ \sim \Big\|\sum_{m \in \Z^{n}} |{\lambda_{0,(m,0)}^{1}}| \chi_{E_{0,(m,0)}} | \ L_{p_{i}}(\R^{n}, w_{\alpha})\Big\| + \sum_{\ell=2}^{2^{n-k}} \Big\|\sum_{j=0}^{\infty} \sum_{m \in \Z^{n}} |\lambda_{j,(m,0)}^{\ell}|\chi_{E_{j,(m,0)}} | \ L_{p_i}(\R^{n}, w_{\alpha})\Big\|\\
&\ \ \ \ \ \ \ \ \sim \norm{\lambda | \prod_{\ell=1}^{2^{n-k}} \mathbf{f}_{p_{i},1}(\R^{n},w_{\alpha})}.
\end{align*}
Then, applying Proposition \ref{prop:interpolation of products}, Proposition \ref{prop:int weighted T-L sequences} and Remark \ref{rmk:equivalent quasi-norm Lorentz}, we have that
\begin{align*}
&\norm{\lambda|(b_{p_1}^{s-\frac{\alpha+k}{p_1}}(\R^{n-k}), b_{p_2}^{s-\frac{\alpha+k}{p_2}}(\R^{n-k}))_{\theta,r}} \sim \norm{T(\lambda)|(\prod_{\ell=1}^{2^{n-k}} \mathbf{f}_{p_{1},1}(\R^{n},w_{\alpha}), \prod_{\ell=1}^{2^{n-k}} \mathbf{f}_{p_{2},1}(\R^{n},w_{\alpha}) )_{\theta,r}} \\
&\ \ \ \ \ \ \sim \norm{T(\lambda)|\prod_{\ell=1}^{2^{n-k}}\mathbf{f}_{1}L_{p,r}(\R^{n},w_{\alpha})}\sim \norm{\Lambda^{s}(f)|L_{p,r}(\R^{n}, w_{\alpha})}. 
\end{align*}
\end{proof}
\begin{cor}\label{cor:interpolation besov spaces}
Let $0 < p_1 < p_2 < \infty$, $\alpha > 0$, $n \in \N$, $ 0 < \theta < 1$, $0 < r \leq \infty$ and $ s \in \R$. Then
\[
(B_{p_1}^{s-\frac{\alpha}{p_1}}(\R^{n}), B_{p_2}^{s-\frac{\alpha}{p_2}}(\R^{n}))_{\theta,r}
\]
is the set of all $f \in \Sc'(\R^{n})$ that admit a wavelet representation on $\R^{n}$
\begin{align}\label{eq: wavelet representation}
f(x) = \sum_{m \in \Z^{n}} \lambda_{m} \psi_{m}(x) + \sum_{\ell=2}^{2^{n}} \sum_{j=0}^{\infty} \sum_{m \in \Z^{n}} \lambda_{m}^{j,\ell} 2^{-jn/2}\psi_{\ell,m}^{j}(x), \quad x \in \R^{n}, 
\end{align}
and $\norm{f|(B_{p_1}^{s-\alpha/p_1}(\R^{n}), B_{p_2}^{s-\alpha/p_2}(\R^{n}))_{\theta,r}} \sim \norm{\Lambda^{s}(\lambda)|L_{p,r}(\R^{n+1}, w_{\alpha-1})} < \infty$, where
\[
\Lambda^{s}(\lambda)(x) =  \sum_{m \in \Z^{n}} \abs{\lambda_m} \chi_{E_{0,m}}(x) + \sum_{\ell=2}^{2^{n}} \sum_{m \in \Z^{n}} \sum_{j=0}^{\infty} 2^{js} |\lambda_{m}^{j,\ell}| \chi_{E_{jm}}(x), \quad x \in \R^{n+1}.
\]
Now $E_{j,m} = Q_{j,m}^{(n)} \times (2^{-j-2}, 2^{-j-1}) \subset \R^{n+1}$.
\end{cor}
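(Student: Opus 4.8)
The plan is to derive Corollary~\ref{cor:interpolation besov spaces} from Proposition~\ref{prop:interpolation besov} by a change of parameters, and then to remove the auxiliary lower bound on $s$ by an elementary lifting. First I would invoke Proposition~\ref{prop:interpolation besov} with the dimension $n$ there replaced by $n+1$, with $k=1$, and with the weight exponent $\alpha$ there replaced by $\alpha-1$; this is admissible precisely because the hypothesis $\alpha>0$ of the corollary means $\alpha-1>-1$, which is exactly the standing requirement on the weight parameter in Proposition~\ref{prop:interpolation besov}. Under this substitution $\R^{n-k}$ becomes $\R^{n}$, the smoothness $s-\frac{(\alpha-1)+1}{p_i}$ equals $s-\frac{\alpha}{p_i}$, the ambient space carries the weight $w_{\alpha-1}$ on $\R^{n+1}$, the cubes $Q_{j,m}^{(n-k)}$ become $Q_{j,m}^{(n)}$, and the sets $E_{j,m}=Q_{j,m}^{(n)}\times(2^{-j-2},2^{-j-1})\subset\R^{n+1}$ are exactly those in the statement. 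Thus the wavelet representation \eqref{eq: wavelet representation} and the function $\Lambda^{s}(\lambda)$ are literally what Proposition~\ref{prop:interpolation besov} produces, and the corollary follows immediately whenever $s-\frac{\alpha}{p_i}>n\big(\frac{1}{p_i}-1\big)_{+}$, $i=1,2$.

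To reach arbitrary $s\in\R$, I would fix $\sigma>0$ so large that $s+\sigma-\frac{\alpha}{p_i}>n\big(\frac{1}{p_i}-1\big)_{+}$ for $i=1,2$, and take the order $u$ of the Daubechies wavelets large enough for Theorem~\ref{thm:wavelet Besov spaces} to apply simultaneously to all the spaces $B_{p_i}^{t}(\R^{n})$ with $t\in\{s-\frac{\alpha}{p_i},\,s+\sigma-\frac{\alpha}{p_i}\}$, $i=1,2$. Let $L_{\sigma}$ be the operator on $\Sc'(\R^{n})$ defined through the wavelet expansion by leaving the coefficients $\lambda_{m}$ unchanged and multiplying each $\lambda_{m}^{j,\ell}$ by $2^{-j\sigma}$. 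On the sequence side $L_{\sigma}$ is, up to the constants inherent in the quasi-norm, an isometry of $b_{p_i,p_i}^{t}(\R^{n})$ onto $b_{p_i,p_i}^{t+\sigma}(\R^{n})$, whence by Theorem~\ref{thm:wavelet Besov spaces} it is an isomorphism $B_{p_i}^{t}(\R^{n})\to B_{p_i}^{t+\sigma}(\R^{n})$ with inverse $L_{-\sigma}$. Consequently $L_{\sigma}$ carries the couple $\big(B_{p_1}^{s-\alpha/p_1}(\R^{n}),B_{p_2}^{s-\alpha/p_2}(\R^{n})\big)$ isomorphically onto $\big(B_{p_1}^{s+\sigma-\alpha/p_1}(\R^{n}),B_{p_2}^{s+\sigma-\alpha/p_2}(\R^{n})\big)$, and the interpolation property, applied to $L_{\sigma}$ and $L_{-\sigma}$, transfers this to an isomorphism between the corresponding real interpolation spaces $(\cdot,\cdot)_{\theta,r}$.

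Since $s+\sigma$ lies in the already-settled range, the first step applies to $g=L_{\sigma}f$: writing $\mu$ for the wavelet coefficients of $L_{\sigma}f$, i.e.\ $\mu_{m}=\lambda_{m}$ and $\mu_{m}^{j,\ell}=2^{-j\sigma}\lambda_{m}^{j,\ell}$, one has $2^{j(s+\sigma)}\abs{\mu_{m}^{j,\ell}}=2^{js}\abs{\lambda_{m}^{j,\ell}}$, hence $\Lambda^{s+\sigma}(\mu)=\Lambda^{s}(\lambda)$ pointwise on $\R^{n+1}$. Chaining the norm equivalence coming from the isomorphism $L_{\sigma}$ with the one supplied by Proposition~\ref{prop:interpolation besov} for $L_{\sigma}f$ then yields $\norm{f|\big(B_{p_1}^{s-\alpha/p_1}(\R^{n}),B_{p_2}^{s-\alpha/p_2}(\R^{n})\big)_{\theta,r}}\sim\norm{\Lambda^{s}(\lambda)|L_{p,r}(\R^{n+1},w_{\alpha-1})}$, while $f$ admits \eqref{eq: wavelet representation} if and only if $L_{\sigma}f$ does; this is the corollary for all $s\in\R$. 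I do not anticipate a real obstacle here: the work is bookkeeping — checking that the substitution $n\mapsto n+1$, $k=1$, $\alpha\mapsto\alpha-1$ transports Proposition~\ref{prop:interpolation besov} correctly (in particular that the sets $E_{j,m}$ and the weight match up) and that the trivial lift $L_{\sigma}$ commutes with real interpolation. The one point requiring attention is that the wavelet order $u$ must be chosen uniformly large enough to serve all the Besov spaces entering the argument. Alternatively, one may simply note that the proof of Proposition~\ref{prop:interpolation besov} uses nothing but the wavelet characterization and sequence-space estimates, both valid for every $s\in\R$, so that the restriction on $s$ stated there is in fact dispensable.
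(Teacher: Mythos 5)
Your proposal is correct and follows essentially the same route as the paper: the corollary is obtained from Proposition~\ref{prop:interpolation besov} with $n+1$, $k=1$ and $\alpha-1$ in place of $n$, $k$, $\alpha$, and the restriction on $s$ is removed by the coefficient shift $\lambda_m^{j,\ell}\mapsto 2^{-j\sigma}\lambda_m^{j,\ell}$ (the paper's isometry $I$ on the sequence spaces $b_{p_i}^{s-\alpha/p_i}(\R^n)$), together with the observation that this shift leaves $\Lambda^{s}(\lambda)$ unchanged. The only cosmetic difference is that you phrase the lift as an operator on distributions via the wavelet expansion, whereas the paper works directly on the sequence-space side.
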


\begin{proof}
By Theorem \ref{thm:wavelet Besov spaces}, we know that $(B_{p_1}^{s-\frac{\alpha}{p_1}}(\R^{n}), B_{p_2}^{s-\frac{\alpha}{p_2}}(\R^{n}))_{\theta,r}$ is the set of all $f \in \Sc'(\R^{n})$ that can be represented as in \eqref{eq: wavelet representation} and 
\[
\norm{f|(B_{p_1}^{s-\frac{\alpha}{p_1}}(\R^{n}), B_{p_2}^{s-\frac{\alpha}{p_2}}(\R^{n}))_{\theta,r}} \sim \norm{\lambda|(b_{p_1}^{s-\frac{\alpha}{p_1}}(\R^{n}), b_{p_2}^{s-\frac{\alpha}{p_{2}}}(\R^{n}))_{\theta,r}}.
\]
Let $\tilde{s} > \max \Big\lbrace \frac{\alpha}{p_1}+n\Big(\frac{1}{p_1}-1\Big)_{+}, \frac{\alpha}{p_2}+n\Big(\frac{1}{p_2}-1\Big)_{+}\Big\rbrace  \blue{= \frac{\alpha}{p_1}+n\Big(\frac{1}{p_1}-1\Big)_{+}}$.
Then
\begin{align*}
I: b_{p_{i}}^{s-\frac{\alpha}{p_i}}(\R^{n}) &\longrightarrow b_{p_{i}}^{\tilde{s}-\frac{\alpha}{p_i}}(\R^{n})\\
\lambda=((\lambda_{m}), (\lambda_{m}^{j,1}),\dots,(\lambda_{m}^{j,2^{n}}))&\longrightarrow ( (\lambda_{m}), (2^{j(s-\tilde{s})}\lambda_{m}^{j,1}),\dots,(2^{j(s-\tilde{s})}\lambda_{m}^{j,2^{n}})),
\end{align*}
is an isometry for $i=1,2$ and applying Proposition \ref{prop:interpolation besov} we get
\begin{align*}
\norm{\lambda|(b_{p_1}^{s-\frac{\alpha}{p_1}}(\R^{n}), b_{p_2}^{s-\frac{\alpha}{p_{2}}}(\R^{n}))_{\theta,r}} &\sim \norm{I(\lambda)|(b_{p_1}^{\tilde{s}-\frac{\alpha}{p_1}}(\R^{n}), b_{p_2}^{\tilde{s}-\frac{\alpha}{p_{2}}}(\R^{n}))_{\theta,r}}\\
&\sim \norm{\Lambda^{\tilde{s}}(I\lambda)|L_{p,r}(\R^{n+1},w_{\alpha-1})}\\
& = \norm{\Lambda^{s}(\lambda)|L_{p,r}(\R^{n+1},w_{\alpha-1})}.
\end{align*}
\end{proof}
\begin{rmk}\label{rmk:interpolation besov spaces}
Using directly results of interpolation theory, we can also obtain Corollary \ref{cor:interpolation besov spaces}. Assume first that $s=0$. Observe that $b_{p_i}^{-\alpha/p_{i}}(\R^{n}) = \ell_{p_i} \times \prod_{\ell=2}^{2^{n}} \ell_{p_i}^{-\frac{\alpha + n}{p_i}}(\bl{p_i})$ for $i=1,2$ and notice also that
\begin{align*}
\ell_{p_i} &= L_{p_i}(\Z^{n},\mu) &\quad \text{with }& &\mu = \sum_{m \in \Z^{n}} \delta_{\{m\}}, \\
\ell_{p_i}^{-\frac{(\alpha + n)}{p_i}}(\bl{p_i}) &= L_{p_i}(\N_{0} \times \Z^{n}, \hat{\mu}) &\quad \text{with }& &\hat{\mu} = \sum_{j=0}^{\infty} \sum_{m\in \Z^{n}} 2^{-j(\alpha + n)} \delta_{\{(j,m)\}}.
\end{align*}
By Proposition \ref{prop:interpolation of products}, we have that
\[
(b_{p_1}^{-\alpha/p_1}(\R^{n}), b_{p_2}^{-\alpha/p_2}(\R^{n}))_{\theta,r} = L_{p,r}(\Z^{n},\mu) \times \prod_{\ell=2}^{2^{n}} L_{p,r}(\N_0 \times \Z^{n}, \hat{\mu}).
\]
Now for any $s \in \R$,
\begin{align*}
I_{s}: b_{p_{i}}^{s-\frac{\alpha}{p_i}}(\R^{n}) &\longrightarrow b_{p_{i}}^{-\frac{\alpha}{p_i}}(\R^{n}) \\
\lambda=((\lambda_{m}), (\lambda_{m}^{j,1}),\dots,(\lambda_{m}^{j,2^{n}}))&\longrightarrow ( (\lambda_{m}), (2^{js}\lambda_{m}^{j,1}),\dots,(2^{js}\lambda_{m}^{j,2^{n}})),
\end{align*}
is an isometry for $i=1,2$ and using Theorem \ref{thm:wavelet Besov spaces} we get
\begin{align*}
&\norm{f|(B_{p_1}^{s-\frac{\alpha}{p_1}}(\R^{n}), B_{p_2}^{s-\frac{\alpha}{p_2}}(\R^{n}))_{\theta,r}} \\
&\hspace{2.5cm}\sim \norm{\lambda|(b_{p_1}^{s-\alpha/p_1}(\R^{n}), b_{p_2}^{s-\alpha/p_2}(\R^{n}))_{\theta,r}} \\
&\hspace{2.5cm} \sim \norm{I_{s}(\lambda)|(b_{p_1}^{-\alpha/p_1}(\R^{n}), b_{p_2}^{-\alpha/p_2}(\R^{n}))_{\theta,r}}\\
&\hspace{2.5cm} \sim \norm{(\lambda_{m})|L_{p,r}(\Z^{n},\mu)} + \sum_{\ell=2}^{2^{n}} \norm{(2^{js}\lambda_{m}^{j,\ell})|L_{p,r}(\N_0 \times \Z^{n}, \hat{\mu})}\\
&\hspace{2.5cm} = \norm{t^{1-1/r}\text{card}\{m \in \Z^{n}: \abs{\lambda_{m}} > t\}^{1/p}|L_{r}(\R)}\\
& \hspace{3.5cm}+ \sum_{\ell=2}^{2^{n}} \Big\|{t^{1-1/r} \Big(\sum_{j=0}^{\infty} 2^{-j(n+\alpha)} \text{card}\{m \in \Z^{n}: |2^{js}\lambda_{m}^{j,\ell}| > t\}\Big)^{1/p}|L_{r}(\R)}\Big\|.
\end{align*}
This quasi-norm is equivalent to the one given in Corollary \ref{cor:interpolation besov spaces}. Indeed, as $E_{j,m}$ in Corollary \ref{cor:interpolation besov spaces} are disjoint sets with $\int_{\R^{n+1}} \chi_{E_{j,m}}(x) w_{\alpha-1}(x) \dint x \sim 2^{-j(\alpha+n)}$, then for $\dint\tilde{\mu} = w_{\alpha-1} \dint x$ in $\R^{n+1}$, it is clear that
\begin{align*}
&\tilde{\mu}\left(\{ x \in \R^{n+1}: \abs{\Lambda^{s}(f)}>t\}\right)\\
& \ \ \ \ \ \ \ \ \ \ = \text{card}\{m \in \Z^{n}: \abs{\lambda_{m}} > t\}+\sum_{\ell=2}^{2^{n}} \sum_{j=0}^{\infty} 2^{-j(n+\alpha)} \text{card}\{m \in \Z^{n}: |2^{js}\lambda_{m}^{j,\ell}| > t\}
\end{align*}
and 
\begin{align*}
&\norm{\Lambda^{s}(\lambda)|L_{p,r}(\R^{n+1}, w_{\alpha-1})} \sim \norm{t^{1-1/r}\text{card}\{m \in \Z^{n}: \abs{\lambda_{m}} > t\}^{1/p}|L_{r}(\R)}\\
& \ \ \ \ \ \ \ \ \ + \sum_{\ell=2}^{2^{n}} \norm{t^{1-1/r} \Big(\sum_{j=0}^{\infty} 2^{-j(n+\alpha)} \text{card}\{m \in \Z^{n}: |2^{js}\lambda_{m}^{j,\ell}| > t\}\Big)^{1/p}|L_{r}(\R)}.
\end{align*}

Finally, notice that with the interpolation approach, the argument works for any $\alpha \in \R$, we do not need to assume that $\alpha >0$. \blue{One might understand this, at first glance, astonishing fact in the way that for smaller $\alpha$ the corresponding smoothness of the involved Besov spaces $B^{s-\alpha/p_i}_{p_i}(\R^n)$, $i=1,2$, increases such that the Besov spaces themselves are getting smaller. Accordingly the assumptions to the sequences representing them become more and more restrictive as can be observed from the argument including the cardinality above.}
\end{rmk}



\bigskip
\noindent \textbf{Acknowledgements.}

\noindent \small{Part of the research of Blanca F. Besoy was done while she visited the Institute of Mathematics of the Friedrich Schiller University Jena. She would like to thank the Research Group on Function Spaces of this Institute for the hospitality given to her during the visit.}


\begin{thebibliography}{36}
\bibitem{AJ}
K.F. Andersen and R.T. John, Weighted inequalities for vector valued maximal functions and singular integrals, Studia Mathematica 69 (1981) 19--31.

\bibitem{BS}
C. Bennett, R. Sharpley. Interpolation of operators. Academic Press, Boston, 1988.

\bibitem{BL}
J. Bergh, J. L{\"o}fstr{\"o}m. Interpolation spaces. An introduction. Springer, Berlin, 1976.

\bibitem{BCT}
B.F. Besoy, F. Cobos, H. Triebel. On function spaces of Lorentz-Sobolev type. Preprint, 2020.

\bibitem{BoHo}
M.~Bownik, Kwok-Pun Ho. Atomic and molecular decompositions of
  anisotropic Triebel-Lizorkin spaces.  Trans. Amer. Math. Soc.
358 (2006), no.~4, 1469--1510.


\bibitem{Bui82}
H.Q. Bui, Weighted Besov and Triebel spaces: Interpolation by the real method, Hiroshima Math. J. 12 (1982) 581--605.

\bibitem{Bui84}
H.Q. Bui, Characterizations of weighted Besov and Triebel-Lizorkin spaces via temperatures, J. Funct. Anal. 55 (1984) 39--62.

\bibitem{Bui96}
H.Q. Bui, M. Paluszy{\'n}ski, M.H. Taibleson, A maximal function characterization of weighted Besov-Lipschitz and Triebel-Lizorkin spaces, Stud. Math. 119 (1996) 219--246.

\bibitem{Bui97}
H.Q. Bui, M. Paluszy{\'n}ski, M.H. Taibleson, Characterization of the Besov-Lipschitz and Triebel-Lizorkin spaces. The case $q<1$, J. Fourier Anal. Appl. 3 (1997) 837--846.

\bibitem{CM}
F. Cobos, J. Martín, On interpolation of function spaces by methods defined by means of polygons, J. Approx. Theory 132 (2005) 182--203.

\bibitem{FJ}
M. Frazier, B. Jawerth, A discrete transform and decompositions of distribution spaces, Journal of Functional Analysis 93 (1990) 34--170.

\bibitem{GC}
J. Garcia-Cuerva, J.L. Rubio de Francia, Weighted norm inequalities and related topics, North-Holland, Amsterdam, 1985.

\bibitem{Gr}
L. Grafakos, Classical Fourier Analysis, Springer, New York, 2008.

\bibitem{HM}
D. D. Haroske, T. Mieth, Traces of Muckenhoupt weighted function spaces in case of distant singularities, Georgian Math. J. 24 (2017) 373--392.

\bibitem{HP}
D.D. Haroske, I. Piotrowska, Atomic decompositions of function spaces with Muckenhoupt weights,
and some relation to fractal analysis, Math. Nachr. 281 (2008) 1476--1494.


\bibitem{HaSchm}
D.D. Haroske and H.-J. Schmei{ss}er, 
On trace spaces of function spaces with a radial weight: the atomic
  approach. {Complex Var. Elliptic Equ.}, 55(8-10) (2010) 875--896. 

\bibitem{HST}
D.D. Haroske, P. Skandera, H. Triebel, An approach to wavelet isomorphisms of function spaces via atomic representation, J. Fourier Anal. Appl. 24(3) (2018) 830--871.

\bibitem{Kree}
P. Kr{\'e}e. Interpolation d'espaces vectoriels qui ne sont ni norm{\'e}s, ni complets. Applications. Ann. Inst. Fourier {\bf 17} (1967) 137-174.


\bibitem{Muck72}
B.~Muckenhoupt, Hardy's inequality with weights.
{Studia Math.}, 44 (1972) 31--38.

\bibitem{Muck72a}
B.~Muckenhoupt. Weighted norm inequalities for the {H}ardy maximal function.
{Trans. Amer. Math. Soc.}, 165 (1972) 207--226.

\bibitem{Muck73}
B.~Muckenhoupt, The equivalence of two conditions for weight functions.
{Studia Math.}, 49 (1973/74) 101--106.


\bibitem{nik}
S. M. Nikol'ski\u\i,
\emph{Approximation of Functions of Several Variables and Embedding Theorems},
``Nauka'', Moscow, 1969;
2nd ed.: Nauka, Moscow, 1977;
English translation: Die Grundlehren der Mathematischen Wissenschaften, Band 205. Springer-Verlag, New York-Heidelberg, 1975.

\bibitem{Peetre}
J. Peetre. New thoughts on Besov spaces. Duke Univ. Math. Series, Duke Univ., Durham, 1976.

\bibitem{P}
I. Piotrowska, Traces on fractals of function spaces with Muckenhoupt weights, Functiones et Approximatio (2006) 95--117.

\bibitem{iwona-phd}
I.~Piotrowska,
{\em Weighted function spaces and traces on fractals},
PhD thesis, Friedrich-Schiller-Universit\"at Jena, Germany, 2006.

\bibitem{rychkov-5}
V. S. Rychkov,
Littlewood-Paley theory and function spaces with $A\sp {\rm loc}\sb p$ weights,
\emph{Math. Nachr.} 224 (2001) 145--180.

\bibitem{St93} 
E.M.~Stein. Harmonic analysis, real-variable methods,
orthogonality, and oscillatory integrals. Princeton Univ. Press, Princeton, 1993. 
  
\bibitem{Tor86}
A.~Torchinsky. Real-variable methods in harmonic analysis. Volume 123 of 
{\em Pure and Applied Mathematics}, Academic Press, Orlando, FL, 1986. 



\bibitem{T78}
H. Triebel. Interpolation theory, function spaces, differential operators. North-Holland, Amsterdam, 1978.

\bibitem{T83}
H. Triebel. Theory of function spaces. Birkh{\"a}user, Basel, 1983.

\bibitem{T92}
H. Triebel. Theory of function spaces II. Birkh{\"a}user, Basel, 1992.

\bibitem{T97}
H. Triebel. Fractals and Spectra. Birkh{\"a}user, Basel, 1997.

\bibitem{T06}
H. Triebel. Theory of function spaces III. Birkh{\"a}user, Basel, 2006.

\bibitem{T08}
H. Triebel. Function spaces and wavelets on domains. European Math. Soc. Publishing House, Z{\"u}rich, 2008.


\bibitem{T15a}
H.~Triebel.
{Tempered Homogeneous Function Spaces}.
EMS Series of Lectures in Mathematics. EMS Publishing House,
  Z\"urich, 2015.



\bibitem{T20}
H. Triebel. Theory of function spaces IV. Birkh{\"a}user, Basel, 2020.

\bibitem{YCP}
  Q. Yang, Z. Cheng, L. Peng. Uniform characterization of function spaces by wavelets. Acta Math. Sci. Ser. A (Chin. Ed.) 25 (2005) 130--144.
  
\end{thebibliography}
\end{document}